\definecolor{bl}{rgb}{0.0,0.2,0.6}
\newtheorem{theorem}{Theorem}[section]
\newtheorem{lemma}[theorem]{Lemma}
\newtheorem{proposition}[theorem]{Proposition}
\newtheorem{corollary}[theorem]{Corollary}
\newtheorem*{corollary*}{Corollary}
\newtheorem{remark}[theorem]{Remark}
\mathchardef\mhyph="2D
\title{On dimensions of groups with cocompact classifying spaces for
  proper actions} 
\author{Ian J. Leary \and Nansen Petrosyan} 
\newcommand{\ebar}{\underline{E}}
\newcommand{\vcd}{\hbox{\rm vcd}}
\newcommand{\cdbar}{\underline{\hbox{\rm cd}}}
\newcommand{\gdbar}{\underline{\hbox{\rm gd}}}
\newcommand{\mF}{\mathcal {F}}
\newcommand{\Z}{\mathbb Z}
\newcommand{\F}{\mathbb F}
\newcommand{\G}{\Gamma}
\newcommand{\orb}{\mathcal{O}_{\mF}G}
\newcommand{\orbe}{\mathcal{O}_{\{e\}}G}
\newcommand{\orbmod}{\mbox{Mod-}\mathcal{O}_{\mF}G}
\newcommand{\Hom}{\mathrm{Hom}}
\newcommand{\rH}{\mathrm{H}}
\newcommand{\sing}{\mathrm{sing}}
\newcommand{\semi}{\rtimes} 
\newcommand{\ev}{\mathrm{ev}}
\newcommand{\mG}{\mathcal{G}}
\newcommand{\mW}{\mathcal{W}}
\begin{document}

\maketitle
\begin{abstract}  
We construct groups $G$ that are virtually torsion-free and have
virtual cohomological dimension strictly less than the minimal
dimension for any model for $\ebar G$, the classifying space for
proper actions of $G$.  They are the first examples that have these
properties and also admit cocompact models for $\ebar G$.  We exhibit
groups $G$ whose virtual cohomological dimension and Bredon
cohomological dimension are two that do not admit any 2-dimensional
contractible proper $G$-CW-complex.
\end{abstract}

\section{Introduction} 
If $G$ is a virtually torsion-free group, the virtual cohomological 
dimension $\vcd G$, is defined to be the cohomological dimension of 
a torsion-free finite-index subgroup $H\leq G$; a lemma due to 
Serre shows that this is well defined~\cite[VIII.3.1]{brownco}.  
Now suppose that $X$ is a contractible $G$-CW-complex that is 
{\em proper}, in the sense that all cell stabilizers are finite. 
In this case any torsion-free subgroup $H$ will act freely on $X$
and so $X/H$ is a classifying space or Eilenberg-Mac~Lane space
$BH$ for $H$.  In particular, $\vcd G$ provides a lower bound 
for the dimension of any such $X$.  K.~S.~Brown asked whether 
this lower bound is always attained~\cite[ch.~2]{brownwall} or 
\cite[VIII.11]{brownco}:  \\

\noindent {\bf Brown's Question ({\normalfont Weak Form}).} 
Does every virtually torsion-free group $G$ admit a contractible proper 
$G$-CW-complex of dimension $\vcd G$? \\

\noindent Until now, this form of Brown's question has remained unanswered.  
We give examples of groups $G$ with $\vcd G=2$ that do not admit
any 2-dimensional contractible proper $G$-CW-complex in Theorem~\ref{acycthm} 
below.  

One reason why this question has been so elusive is that there 
are many different equivariant homotopy types of contractible
proper $G$-CW-complexes.  The most natural example is the 
classifying space for proper $G$-actions, $\underline{E}G$, 
which plays the same role in the homotopy category of proper 
$G$-CW-complexes as $EG$ plays for free $G$-CW-complexes.  
A model for $\ebar G$ is a proper $G$-CW complex $X$ 
such that for any finite $F\leq G$, the $F$-fixed point set 
$X^F$ is contractible.  Such an $X$ always exists, and is unique 
up to equivariant homotopy equivalence.  Let $\gdbar G$ denote the
minimal dimension of any model for $\ebar G$.  

The version of Brown's question that concerns
$\underline{E}G$~\cite[ch.~2]{brownwall} or 
\cite[VIII.11]{brownco} is usually asked in the form: \\

\noindent {\bf Brown's Question ({\normalfont Strong
   Form}).} 
Does $\gdbar G=\vcd G$ for every virtually 
torsion-free $G$? \\
 
\noindent We prefer to split this question into two separate
questions.  There is an algebraic dimension 
$\cdbar G$ that bears a close relationship to $\gdbar G$, analogous
to the relationship between cohomological dimension and the 
minimal dimension of an Eilenberg-Mac~Lane space.  It can be 
shown that $\cdbar G= \gdbar G$ except that there may exist 
$G$ for which $\cdbar G=2$ and $\gdbar G=3$, and $\cdbar G$ 
is an upper bound for the cohomological dimension of any 
torsion-free subgroup of $G$~\cite{LuckMeintrup}.  In view 
of this we may split the strong form of Brown's question into
two parts, one geometric and one algebraic.
\begin{itemize} 
\item[]
Does there exist $G$ for which $\gdbar G\neq \cdbar G$? 

\item[]
Does there exist virtually torsion-free
$G$ for which $\cdbar G> \vcd G$?  
\end{itemize}
Examples of virtually torsion-free groups $G$ for which $\gdbar G=3$
and $\cdbar G=2$ were given in~\cite{BLN}.  These groups $G$ are
Coxeter groups.  Examples of $G$ for which $\cdbar G>\vcd G$ were
given in~\cite{VFG}, and more recently in~\cite{CMP,DDNP}.  The
advantage of the examples in~\cite{CMP,DDNP} is that in some sense
they have the least possible torsion.  For any virtually torsion-free
$G$, it can be shown that $\cdbar G$ is bounded by the
sum $\vcd G + \ell(G)$, where $\ell(G)$ is the maximal length of a
chain of non-trivial finite subgroups of $G$~\cite[6.4]{Luck1}.  
This bound is attained for the examples in~\cite{CMP,DDNP} but not for
the examples in~\cite{VFG}.
To date, all constructions of groups $G$ for which $\cdbar G>\vcd G$
have used finite extensions of Bestvina-Brady groups~\cite{BB}, and 
none of these groups $G$ admit a cocompact model for $\ebar G$.  One 
of our main results is the construction of virtually torsion-free $G$ 
admitting a cocompact $\ebar G$ for which $\cdbar G>\vcd G$.  
Amongst our examples, the easiest to describe are extensions of 
a right-angled Coxeter group by a cyclic group of prime order.  
By taking instead a cyclic extension of a torsion-free finite
index subgroup of the same Coxeter group we obtain examples 
with cocompact $\ebar G$ and for which $\cdbar G = \vcd G+ \ell(G)$.

We say that a simplicial action of a group on a simplicial complex is
{\it admissible} if the setwise stabilizer of each simplex equals to
its pointwise stabilizer.  Many of the other terms used in the
statements of our main theorems will be defined below.  

\begin{theorem}\label{th:ksbq}Let $L$ be a finite $n$-dimensional 
acyclic flag complex with an admissible simplicial action of a finite
group $Q$, and let $W_L$ be the corresponding right-angled Coxeter
group so that $Q$ acts as a group of automorphisms of $W_L$.  Let $N$
be any finite-index normal subgroup of $W_L$ such that $N$ is
normalized by $Q$, and let $G$ be the semidirect product $N\semi Q$. 
This $G$ admits a cocompact model for $\ebar G$.  Let $L^\sing$ denote
the subcomplex of $L$ consisting of points with non-trivial stabilizer
in $Q$.
\[\hbox{If}\,\,\, H^n(L,L^\sing)\neq 0,\,\,\,\hbox{then}\,\,\,
\cdbar G=n+1 \,\,\,\hbox{and}\,\,\, \vcd G\leq n.\]

Now suppose that $L_i$, $Q_i$, $n_i$, $N_i$, $W_i$ and $G_i$ are 
as above for $i=1,\ldots,m$, and let $\Gamma :=G_1\times \cdots 
\times G_m$.  As before, there is a cocompact model for $\ebar\Gamma$, 
and   
\[\hbox{if}\,\,\, \bigotimes_{i=1}^m H^{n_i}(L_i,L_i^\sing) \neq 0, 
\,\,\,\hbox{then}\,\,\, \cdbar \Gamma = 
m+\sum_{i=1}^m n_i \,\,\,\hbox{and}\,\,\,
\vcd \Gamma\leq \sum_{i=1}^mn_i.\] 

Furthermore $\vcd G=n$ if either $L$ is a barycentric subdivision or
$L^\sing$ is a full subcomplex of $L$.  Similarly, $\vcd \Gamma=\sum_i
n_i$ if for each $i$, either $L_i$ is a barycentric subdivision or 
$L_i^\sing$ is a full subcomplex of $L_i$.  
\end{theorem}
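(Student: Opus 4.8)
The plan is to prove $\vcd G\ge n$; together with the inequality $\vcd G\le n$ established above this gives $\vcd G=n$. Since $N$ has finite index in both $G$ and $W_L$ and $\vcd$ is a commensurability invariant, $\vcd G=\vcd N=\vcd W_L$, so it suffices to show $\vcd W_L\ge n$. For this I will use the standard description of the cohomology of a right-angled Coxeter group: if $\Gamma\le W_L$ is a torsion-free subgroup of finite index and $\Sigma$ is the Davis complex, then $H^*(\Gamma;\Z\Gamma)\cong H^*_c(\Sigma)$, and Davis's formula expresses this as a direct sum of groups $\widetilde H^{*-1}(L_{\mathrm{Out}(w)};\Z)$ of full subcomplexes of $L$, the summand for $w=e$ being $\widetilde H^{*-1}(L;\Z)$. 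Applying this to a standard (parabolic) subgroup $W_A\le W_L$, whose nerve is the full subcomplex $L_A$, and using that $\vcd$ does not increase under passage to subgroups, one obtains the tool I need: if $M$ is any full subcomplex of $L$ and $\widetilde H^{k-1}(M;R)\neq 0$ for some coefficient ring $R$, then $\vcd W_L\ge k$. So it remains to exhibit a full subcomplex $M$ of $L$ with $\widetilde H^{n-1}(M;\Z)\neq 0$.

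Here the hypothesis $H^n(L,L^\sing)\neq 0$ enters. Because $L$ is acyclic and $n$-dimensional, the cohomology long exact sequence of the pair $(L,L^\sing)$ together with the vanishing of $H^n(L;\Z)$ gives $\widetilde H^{n-1}(L^\sing;\Z)\neq 0$ (for $n=1$ this just says that $L^\sing$ is disconnected, and the desired conclusion $\vcd W_L\ge 1$ reduces to the elementary fact that $W_L$ is then infinite). If $L^\sing$ happens to be a full subcomplex of $L$ we are done at once, taking $M=L^\sing$.

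The case in which $L$ is a barycentric subdivision is the substance of the statement, and the point is precisely that $L^\sing$ need not then be a full subcomplex --- two vertices of $L$ with non-trivial but incompatible stabilisers may be joined by an edge whose stabiliser is trivial. Write $L\cong K'$; the admissible action of $Q$ on $L$ is induced by a simplicial action on $K$, since a simplicial automorphism of a barycentric subdivision preserves the grading of the face poset and hence comes from an automorphism of $K$. The plan is to replace $L^\sing$ by a full subcomplex of $L$ to which it is cohomologically equivalent in degree $n-1$: the natural candidates are the barycentric subdivision $(\overline{K^\sing})'$ of the subcomplex $\overline{K^\sing}\subseteq K$ generated by the $Q$-singular simplices, and the full subcomplex of $L$ on the singular vertices. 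In either case the barycentric subdivision of a subcomplex is a full subcomplex of the barycentric subdivision of the ambient complex, so once the $(n-1)$st cohomology of the chosen subcomplex is known to be non-zero the tool of the first paragraph applies and gives $\vcd W_L\ge n$. Verifying that this cohomology is non-zero --- that is, comparing $\widetilde H^{n-1}$ of $L^\sing$ with that of the chosen full subcomplex by exploiting the equivariant structure of the fixed-point subcomplexes (for instance an equivariant deformation retraction of $|\overline{K^\sing}|$ onto the singular set $|L^\sing|$, obtained by retracting each invariant simplex onto its fixed subpolytope by averaging over the stabiliser and patching these retractions in order of decreasing dimension, or a poset-theoretic comparison using the full fixed-point subcomplexes $L^q$ and their intersections) --- is the main obstacle of the proof.

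For the product statement, a torsion-free finite-index subgroup of $\Gamma=G_1\times\cdots\times G_m$ is a product of torsion-free finite-index subgroups of the factors, and $W_{L_1}\times\cdots\times W_{L_m}\cong W_{L_1\ast\cdots\ast L_m}$, so $\vcd\Gamma=\vcd W_{L_1\ast\cdots\ast L_m}$. Choose a coefficient field $\F$ (namely $\mathbb Q$ or some $\F_p$) for which $\bigotimes_i H^{n_i}(L_i,L_i^\sing;\F)\neq 0$; this exists because $\bigotimes_i H^{n_i}(L_i,L_i^\sing)\neq 0$. Then each $H^{n_i}(L_i,L_i^\sing;\F)\neq 0$, so running the argument above with $\F$-coefficients produces, for each $i$, a full subcomplex $M_i\subseteq L_i$ with $\widetilde H^{n_i-1}(M_i;\F)\neq 0$. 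Now $M_1\ast\cdots\ast M_m$ is a full subcomplex of $L_1\ast\cdots\ast L_m$, and since over a field the join formula has no Tor terms, $\widetilde H^{(\sum_i n_i)-1}(M_1\ast\cdots\ast M_m;\F)$ contains $\bigotimes_i\widetilde H^{n_i-1}(M_i;\F)\neq 0$. The tool then gives $\vcd\Gamma\ge\sum_i n_i$, which together with the reverse inequality already proved yields $\vcd\Gamma=\sum_i n_i$.
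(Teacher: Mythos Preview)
Your reduction $\vcd G=\vcd W_L$ and the ``tool'' (if $M$ is a full subcomplex of $L$ with $\widetilde H^{k-1}(M)\neq 0$ then $\vcd W_L\ge k$, via Davis's formula for $H^*_c(\Sigma)$) are sound, and your treatment of the case where $L^\sing$ is a full subcomplex is essentially the paper's argument: the long exact sequence of $(L,L^\sing)$ forces $\widetilde H^{n-1}(L^\sing)\neq 0$, and then $W_{L^\sing}\le W_L$ gives $\vcd W_L\ge n$.

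The barycentric subdivision case, however, has a real problem.  First, your claimed lemma that every simplicial automorphism of $K'$ comes from an automorphism of $K$ is false: already for $K$ a single $2$--simplex, $\mathrm{Aut}(K')$ is dihedral of order $12$ while $\mathrm{Aut}(K)\cong S_3$; the extra automorphisms send original vertices to edge barycentres.  So the $Q$--action on $L$ need not descend to $K$, and ``$K^\sing$'' is not defined in general.  Second, and more fundamentally, you are working much too hard: the conclusion $\vcd W_L\ge n$ in this case has nothing to do with $Q$ or with $L^\sing$.  The hypothesis ``$L$ is a barycentric subdivision'' is purely a hypothesis about $L$, and should be exploited directly.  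The paper does this: if $L=K'$ and $\sigma$ is any $n$--simplex of $K$, then the full subcomplex $L\setminus\{\hat\sigma\}$ on the vertices other than the barycentre $\hat\sigma$ has geometric realisation $|K|\smallsetminus\mathrm{int}(\sigma)$, hence $\widetilde H^{n-1}(L\setminus\{\hat\sigma\};\Z)\cong\Z$.  Your own tool then yields $\vcd W_L\ge n$ immediately.  So the ``main obstacle'' you describe simply evaporates once you stop trying to route the argument through $L^\sing$.

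Your product argument via joins is a legitimate alternative to the paper's route.  The paper instead shows $H^{n_i}(W_i;\F_pW_i)\neq 0$ for each factor (in both sub-cases), and then invokes the K\"unneth theorem for compactly supported cohomology to obtain $H^{n}(W;\F_pW)\neq 0$ for $W=\prod_i W_i$ and $n=\sum n_i$.  Your join argument has the virtue of staying inside the full--subcomplex framework: once each $M_i\subseteq L_i$ is chosen with $\widetilde H^{n_i-1}(M_i;\F)\neq 0$, the join $M_1*\cdots*M_m$ is full in $L_1*\cdots*L_m$ and has non-vanishing $\widetilde H^{n-1}$ over $\F$, so the same tool applies directly to $W_{L_1*\cdots*L_m}\cong\prod_i W_{L_i}$.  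Note that in the barycentric subdivision case the paper's choice $M_i=L_i\setminus\{\hat\sigma_i\}$ gives $\widetilde H^{n_i-1}(M_i;\Z)\cong\Z$, which survives to any field, so your join approach works uniformly once you adopt the simpler $M_i$.
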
 

\begin{corollary}\label{cora} For each $m\geq 1$ there exists a
virtually torsion-free group $\Gamma_m$ admitting a cocompact 
$\ebar \Gamma_m$ and such that $$\cdbar \Gamma_m = 3m 
> \vcd \Gamma_m =2m.$$ 

For each $m\geq 1$ there exists a virtually torsion-free 
group $\Lambda_m$ admitting a cocompact $\ebar \Lambda_m$ and such 
that $$\cdbar \Lambda_m = 4m = 
\vcd \Lambda_m +\ell(\Lambda_m) > \vcd\Lambda_m = 3m.$$ 
Furthermore, $\Lambda_m$ may be chosen so that either every finite subgroup is 
cyclic or, for any fixed prime $q$, every nontrivial finite subgroup is 
 abelian of exponent $q$.  
\end{corollary}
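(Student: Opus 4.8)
The plan is to derive both statements from Theorem~\ref{th:ksbq} by constructing one building block per prime that occurs and feeding $m$ copies of suitable blocks into the product version of that theorem. If blocks $(L_i,Q_i,n_i,N_i)$ are chosen with $\bigotimes_{i=1}^m H^{n_i}(L_i,L_i^{\sing})\neq 0$ and each $L_i$ a barycentric subdivision, then Theorem~\ref{th:ksbq} produces a virtually torsion-free $\Gamma=G_1\times\cdots\times G_m$ admitting a cocompact $\ebar\Gamma$, with $\cdbar\Gamma=m+\sum_i n_i$ and $\vcd\Gamma=\sum_i n_i$; choosing every $N_i$ torsion-free makes each finite subgroup of $\Gamma$ embed in $Q_1\times\cdots\times Q_m$, and conversely $Q_1\times\cdots\times Q_m$ embeds in $\Gamma$, so $\ell(\Gamma)=\ell(Q_1\times\cdots\times Q_m)$. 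Taking all $n_i=2$ gives $\Gamma_m$ with $\cdbar\Gamma_m=3m>\vcd\Gamma_m=2m$. Taking all $n_i=3$ gives $\cdbar\Gamma=4m$ and $\vcd\Gamma=3m$: if $Q_i\cong\Z/p_i$ for distinct primes $p_1,\dots,p_m$ then every finite subgroup of $\Gamma$ is cyclic and $\ell(\Gamma)=m$, whereas if every $Q_i\cong\Z/q$ then every nontrivial finite subgroup is abelian of exponent $q$ and again $\ell(\Gamma)=m$; in either case the resulting group $\Lambda_m$ satisfies $\cdbar\Lambda_m=4m=\vcd\Lambda_m+\ell(\Lambda_m)>\vcd\Lambda_m=3m$.

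Everything thus reduces to constructing the blocks. Since $L$ is acyclic, the long exact sequence of the pair $(L,L^{\sing})$ gives $H^n(L,L^{\sing})\cong H^{n-1}(L^{\sing})$ for $n\ge 2$, so I need a finite $n$-dimensional acyclic flag complex carrying an admissible $Q$-action whose singular subcomplex has non-zero $(n-1)$st integral cohomology, arranged so that the tensor products across the $m$ factors survive. Smith theory is the governing constraint: for every prime-order subgroup $P\le Q$ the fixed set $L^P$ is mod-$p$ acyclic, and when $n=2$ one checks from $\dim L=2$ and acyclicity of $L$ that $L^P$ is in fact $\Z$-acyclic, so if $Q$ were a $p$-group then $L^{\sing}=\bigcup_{P\neq 1}L^P$ would be mod-$p$ acyclic with $H^1(L^{\sing};\Z)=0$; hence the two-dimensional blocks must use groups not of prime-power order. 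For $n=3$ I fix an auxiliary prime $r$ and take $Q\cong\Z/p$ for a prime $p\neq r$ --- in the first case the $p_i$ are distinct primes all different from $r$ (for example $r=2$ and every $p_i$ odd), in the second case $p=q$ --- letting $L^{\sing}=L^{\Z/p}$ be a flag model of a mod-$p$ acyclic $2$-complex with $H_1\cong\Z/r$ and $H_2=0$ (a triangulation of $\mathbb{RP}^2$ when $r=2$, the presentation $2$-complex of $\Z/r$ in general); then $H^3(L,L^{\sing})\cong H^2(L^{\sing})\cong\Z/r$ for each factor, so $\bigotimes_i\Z/r\cong\Z/r\neq 0$. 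For $n=2$ I take $Q$ not of prime-power order, for instance $Q\cong\Z/6$, and arrange the (necessarily $\Z$-acyclic) fixed sets of the prime-order subgroups so that their intersections over subgroups of non-prime-power order --- where Smith theory permits disconnectedness --- are disconnected, making $L^{\sing}$ homotopy equivalent to a non-empty wedge of circles; then $H^2(L,L^{\sing})\cong H^1(L^{\sing})$ is free of positive rank and every tensor power survives.

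To realise a prescribed fixed-point pattern on a complex of the correct dimension I would argue by hand rather than quote a general realisation theorem. For $n=3$: given the chosen singular set $K$, form the join $K\ast\Omega$ with a free $\Z/p$-orbit $\Omega$; its $\Z/p$-fixed set is $K\ast\emptyset=K$ and its dimension is $\dim K+1=3$, and the homology of a join shows that the only surviving reduced homology is a single $\Z/p$-module in degree $2$, which I kill by equivariantly attaching one free $\Z/p$-orbit of $3$-cells whose boundary lifts a surjection onto that module and is injective over $\Z$, so no $H_3$ is created; barycentric subdivision then gives a flag complex with admissible action in which $L^{\sing}$ is a full subcomplex, so the ``furthermore'' clause of Theorem~\ref{th:ksbq} applies. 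The main obstacle is the two-dimensional block: with $\dim L=2$ there is no room to cancel surplus two-dimensional homology by attaching three-cells, so one must choose $Q$ and the poset of fixed subcomplexes delicately enough that the surviving homology is removable by free two-cells while $L$ stays two-dimensional and acyclic, and Smith theory simultaneously forbids prime-power $Q$ and pins down which intersection patterns of the $\Z$-acyclic fixed sets can occur. Once such a family of blocks is in hand, substituting into Theorem~\ref{th:ksbq} --- reusing one two-dimensional block $m$ times for $\Gamma_m$, and using the $m$ three-dimensional blocks with groups $\Z/p_i$ or $\Z/q$ for $\Lambda_m$ --- yields the corollary.
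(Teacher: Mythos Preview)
Your overall strategy---feeding blocks into Theorem~\ref{th:ksbq} and reading off $\ell(\Lambda_m)$ from the product $Q_1\times\cdots\times Q_m$---matches the paper's. But both block constructions, as you have written them, fail.

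For the two-dimensional block, your claim that Smith theory permits $L^{\Z/6}$ to be disconnected is wrong. Since $L^{\Z/3}$ is mod-$3$ acyclic and is a subcomplex of the $2$-dimensional acyclic complex $L$, it is $\Z$-acyclic; a second application of Smith theory to the residual $\Z/2$-action on $L^{\Z/3}$ then forces $L^{\Z/6}=(L^{\Z/3})^{\Z/2}$ to be mod-$2$ acyclic, hence again $\Z$-acyclic. Mayer--Vietoris now gives $L^\sing=L^{\Z/2}\cup L^{\Z/3}$ acyclic, so $H^2(L,L^\sing)=0$. The same iterated-Smith argument (this is essentially Proposition~\ref{propacyc}) combined with a nerve argument rules out every solvable $Q$: all fixed sets $L^H$ and their intersections are $\Z$-acyclic, the nerve of the cover of $L^\sing$ by the $L^H$ is a simplex, and hence $L^\sing$ is acyclic. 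The paper instead takes $Q=A_5$---the smallest non-solvable group---acting on a flag triangulation of the $2$-skeleton of the Poincar\'e homology sphere, a classical fixed-point-free action on an acyclic $2$-complex for which one checks directly that $H^2(L,L^\sing)\neq 0$.

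For the three-dimensional block, an Euler-characteristic count kills your join construction. Since $\widetilde\chi(K)=0$ one has $\chi(K*\Omega)=1$, and attaching a free $\Z/p$-orbit of $3$-cells yields $\chi=1-p\neq 1$, so the result can never be acyclic. The underlying reason is that $H_2(K*\Omega)\cong(\Z/r)^{p-1}$ is pure torsion, and killing torsion in $H_2$ with $3$-cells inevitably creates free $H_3$. The paper's construction avoids this: starting from the Moore space it first attaches a free $Q$-orbit of $2$-cells by degree-one maps to the circle, making the intermediate $H_2$ a \emph{free} $\Z Q$-module of rank one; a single free orbit of $3$-cells then kills it exactly, leaving $H_3=0$.
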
 

In contrast to the above results, Degrijse and Mart\'\i{n}ez-P\'erez 
have shown that $\vcd G=\cdbar G$ for a large class of groups that 
contains all (finitely generated) Coxeter groups~\cite{DDMP}.  

\begin{theorem}\label{acycthm} 
Suppose that $L$ is a finite 2-dimensional acyclic flag complex 
such that the fundamental group of $L$ admits a non-trivial 
unitary representation $\rho:\pi_1(L)\rightarrow U(n)$ for 
some $n$.  Then $\vcd W_L=\cdbar W_L=2$, 
there is a cocompact 3-dimensional model for $\ebar W_L$, and 
yet there exists no proper 2-dimensional contractible $W_L$-CW-complex.  
\end{theorem}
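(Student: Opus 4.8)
The plan is to combine two ingredients: the general machinery of Theorem~\ref{th:ksbq} applied to $L$ with the trivial group $Q=\{e\}$, and a separate geometric obstruction argument using the unitary representation $\rho$. Since $Q$ is trivial here, $L^\sing=\emptyset$, so $H^2(L,L^\sing)=H^2(L)=0$ because $L$ is acyclic; thus Theorem~\ref{th:ksbq} does not directly give a jump in $\cdbar$. Instead I would invoke the part of the theory behind Theorem~\ref{th:ksbq} that produces the cocompact model for $\ebar W_L$ and the standard fact (Davis complex / Bestvina--Brady type computations) that for a $2$-dimensional acyclic flag complex $L$ one has $\cdbar W_L=2$, together with the existence of a torsion-free finite-index subgroup whose cohomological dimension is $2$, giving $\vcd W_L=2$. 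The model for $\ebar W_L$ of dimension $3$ comes from the explicit cocompact construction (the Davis complex built from $L$, whose dimension is $\dim L+1=3$); I would cite or reprove that the Davis complex is a cocompact model for $\ebar W_L$ here.

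The heart of the matter is showing that no proper $2$-dimensional contractible $W_L$-CW-complex $X$ exists. First I would fix a torsion-free finite-index subgroup $H\le W_L$; then $X$ restricts to a free $2$-dimensional contractible $H$-CW-complex, so $X/H$ is a $2$-dimensional $K(H,1)$. The key extra structure is that $L$ embeds (as a retract, or at least induces a map on fundamental groups) so that $\pi_1(L)$ maps to $W_L$ and hence to $H$ after passing to a finite-index subgroup; composing with $\rho$ gives a nontrivial unitary representation of a subgroup of $H$. The obstruction I want to exploit is an $\ell^2$-Betti number or, more elementarily, a Poincar\'e--duality / Euler-characteristic style argument: a proper cocompact action on a contractible $2$-complex forces the second $\ell^2$-Betti number (or an equivariant Euler characteristic) to vanish in a way incompatible with the nontrivial local coefficient system coming from $\rho$. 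Concretely, I expect to argue that the nontrivial unitary representation produces a nonzero class in $H^2$ of $W_L$ (or of $H$) with coefficients in a module built from $\rho$, whereas a $2$-dimensional contractible proper $W_L$-CW-complex would force the relevant cohomology in the top dimension with such twisted coefficients to vanish because the chain complex in degree $2$ would have to be "small" relative to degrees $0,1$ after tensoring with the flat unitary bundle.

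The main obstacle is making precise why the existence of the nontrivial $\rho$ obstructs a $2$-dimensional model even though $\vcd=\cdbar=2$: the algebraic invariants do not see it, so the argument must be genuinely geometric/analytic. I anticipate the cleanest route is via $\ell^2$-cohomology: a finite-index torsion-free $H$ acting freely cocompactly (after choosing $X$ cocompact, which one may reduce to by a standard argument, or by working with $\ell^2$-invariants that only need finite stabilizers) on a contractible $2$-complex has $b_2^{(2)}(H)$ computable from the $2$-cells, and the Atiyah/L\"uck approximation together with the nontrivial unitary $\rho$ forces a contradiction with the vanishing forced by acyclicity of $L$ in degree $2$. Verifying that $\pi_1(L)$ genuinely injects (virtually) into $W_L$, and that the representation survives, is a routine but essential check using the standard retraction of the Davis complex onto (a copy of) $L$; I would record this as a lemma before the main contradiction. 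The analytic estimate producing the contradiction from $\rho$ is the step I expect to require the most care.
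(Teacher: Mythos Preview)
Your proposal has a fundamental gap in the heart of the argument. You want to transport the representation $\rho:\pi_1(L)\to U(n)$ to a representation of (a finite-index subgroup of) $W_L$, via some map $\pi_1(L)\to W_L$ coming from a ``retraction of the Davis complex onto a copy of $L$''. No such map exists. The Davis complex is simply-connected; the copies of $L$ sitting inside it are links of vertices, not retracts. More decisively, in the key example $L$ is the $2$-skeleton of the Poincar\'e sphere and $\pi_1(L)\cong SL(2,5)$, a nonabelian finite group, whereas by Lemma~\ref{conjugacy} every finite subgroup of $W_L$ is elementary abelian of exponent~$2$. So $\pi_1(L)$ cannot embed, even virtually, in $W_L$, and there is no way to turn $\rho$ into a representation of any subgroup of $W_L$. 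Once this bridge fails, the rest of your plan (twisted cohomology of $H$, $\ell^2$-Betti numbers of $H$) has nothing to grab onto: those invariants are invariants of $W_L$ alone and cannot see $\rho$. Indeed, since $\vcd W_L=\cdbar W_L=2$, every purely algebraic invariant of $W_L$ is consistent with a $2$-dimensional model; the obstruction must come from somewhere else. Your reduction to a cocompact $X$ is also unjustified---the statement concerns arbitrary proper contractible $2$-complexes.

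The paper's argument brings $\pi_1(L)$ into play in a completely different way: it never enters $W_L$. Given a hypothetical $2$-dimensional contractible proper $W_L$-complex $X$, one forms $X^\#:=\bigcup_{s\in S}X^s$, the union of the fixed-point sets of the Coxeter generators. Smith theory (Proposition~\ref{propacyc}) makes each intersection of these fixed-point sets acyclic, so the nerve lemma (Lemma~\ref{lemmahelly}) yields a surjection $\pi_1(X^\#)\twoheadrightarrow\pi_1(L)$, since the nerve of this cover is exactly $L$. Thus $\rho$ pulls back to a nontrivial unitary representation of $\pi_1(X^\#)$. The contradiction then comes from the Gerstenhaber--Rothaus theorem (Proposition~\ref{kervair}): since $X^\#$ sits inside the contractible $X$, any element of $\pi_1(X^\#)$ can be killed by attaching finitely many $1$- and $2$-cells in a way that keeps the complex acyclic, and Gerstenhaber--Rothaus says a unitary representation would have to extend across such an extension---impossible if a nontrivially represented element has been killed. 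The role of $\rho$ is thus to obstruct the combinatorics of fixed-point sets inside $X$, not to produce a twisted-coefficient class for $W_L$.
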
 

Theorem~\ref{acycthm} strengthens a result from~\cite{BLN}, and
gives the first negative answer to the weak form
of Brown's question.  A different argument was used in~\cite{BLN} to
show that $\cdbar W_L=2 < \gdbar W_L=3$ for some of the flag complexes
$L$ that appear in Theorem~\ref{acycthm}.  

We remark that finitely generated Coxeter groups are linear over
$\Z$~\cite{brown}, and that this property passes to subgroups and to
finite extensions.  Hence all of the groups appearing in the above
statements are linear over $\Z$.  As will be seen from the proofs,
each group appearing in our statements acts properly and cocompactly
on a CAT(0) cube complex; in particular they are all CAT(0) groups.  A
right-angled Coxeter group $W_L$ is (Gromov) hyperbolic if and only if
$L$ satisfies the flag-no-square condition.  Hyperbolicity passes to
finite index subgroups and finite extensions.  Since any 2- or
3-dimensional flag complex admits a flag-no-square
subdivision~\cite{dran,prsw} it follows that the groups $\Gamma_1$ and
$\Lambda_1$ in Corollary~\ref{cora} and the groups $W_L$ in
Theorem~\ref{acycthm} may be taken to be hyperbolic (and CAT($-1$), a
possibly stronger property) in addition to their other stated
properties.

\section{Classifying spaces and Bredon cohomology}

The algebraic analogs of the geometric finiteness properties exhibited
by classifying spaces of groups for families of subgroups are
formulated using Bredon cohomology.  This cohomology theory was
introduced by Bredon in \cite{Bredon} for finite groups and was
generalised to arbitrary groups by L\"{u}ck (see \cite{Luck}).

Let $G$ be a discrete group.  A family $\mathcal{F}$ of subgroups 
of $G$ is a non-empty set of subgroups which is closed under 
conjugation and taking subgroups, in the sense that if $H\in \mathcal{F}$,
$g\in G$ and $K\leq H$, then $K\in\mathcal{F}$ and $gHg^{-1}\in 
\mathcal{F}$.  The \emph{orbit category} $\orb$ is 
the category with objects the left cosets $G/H$ for all $H \in
\mathcal{F}$ and morphisms all $G$-equivariant functions between
the objects. In $\orb$, every morphism $\varphi: G/H \rightarrow G/K$
is completely determined by $\varphi(H)$, since
$\varphi(xH)=x\varphi(H)$ for all $x \in G$. Moreover, there exists a
morphism \[G/H \rightarrow G/K : H \mapsto xK\] if and only if
$x^{\scriptscriptstyle -1}Hx \subseteq K$.

An \emph{$\orb$-module} is a contravariant functor $M: \orb
\rightarrow \mathbb{Z}\mbox{-mod}$. The \emph{category of
$\orb$-modules} is denoted by $\orbmod$. By definition, it has as objects
all $\orb$-modules and as morphisms all natural
transformations between these objects.  The category $\orbmod$ is an 
abelian category that contains enough projectives and 
so one can construct bi-functors
$\mathrm{Ext}^{n}_{\orb}(-,-)$ that have all the usual properties. The
\emph{$n$-th Bredon cohomology of $G$} with coefficients $M \in
\orbmod$ is by definition
\[ \mathrm{H}^n_{\mathcal{F}}(G;M)=
\mathrm{Ext}^{n}_{\orb}(\underline{\mathbb{Z}},M), \] where
$\underline{\mathbb{Z}}$ is the constant functor, which sends each
object to $\Z$ and each morphism to the identity map on $\Z$.  There
is also a notion of \emph{Bredon cohomological dimension} of $G$ for
the family $\mathcal{F}$, denoted by $\mathrm{cd}_{\mathcal{F}}(G)$
and defined by
\[ \mathrm{cd}_{\mathcal{F}}(G) = \sup\{ n \in \mathbb{N} \ | \
\exists M \in \orbmod :  \mathrm{H}^n_{\mathcal{F}}(G; M)\neq 0 \}. \]

When $\mF$ is the family of finite subgroups, then
$\rH^*_{\mF}(G,M)$ and $\mathrm{cd}_{\mF}G$ are
denoted by $\underline{\rH}^*(G,M)$ and $\underline{\mathrm{cd}}G$,
respectively.  Since the augmented cellular chain complex of any model
for $E_{\mF}G$ yields a projective resolution of
$\underline{\Z}$ that can be used to compute
$\mathrm{H}_{\mF}^{\ast}(G;-)$, it follows that
$\mathrm{cd}_{\mathcal{F}}(G) \leq
\mathrm{gd}_{\mathcal{F}}(G)$. Moreover, it is known (see for 
example~\cite[0.1]{LuckMeintrup}) 
that \[\mathrm{cd}_{\mathcal{F}}(G) \leq
\mathrm{gd}_{\mathcal{F}}(G) \leq \max\{3,
\mathrm{cd}_{\mathcal{F}}(G) \}.\]

For any $\Z G$-module $M$, one may define an $\orb$-module $\underline{M}$ 
by $$\underline{M}(G/H)=\Hom_G(\Z [G/H],M);$$ note that this is compatible with 
the notation $\underline{\Z}$ introduced earlier and that this functor is isomorphic to the fixed-point functor
$$M^{\overline{\;\;}}:\orb\to \mathbb{Z}\mbox{-mod}: G/H\mapsto M^H.$$  For any $G$-CW-complex
$X$ with stabilizers in $\mF$, it can be shown that Bredon cohomology 
with coefficients in $\underline{M}$ is naturally isomorphic to the 
ordinary equivariant cohomology of $X$ with coefficients in $M$: 
$H^*_\mF(X;\underline{M})\cong H^*_G(X;M)$.  This follows because 
the adjointness of the restriction and coinduction functors between
$\Z G$-mod and $\orbmod$ associated to the functor $$G=\orbe\to \orb
:G/{\{e\}}\mapsto G/{\{e\}}$$ gives an isomorphism of  cochain
complexes  
\[\Hom_\mF(C^\mF_*(X),\underline{M})\cong \Hom_G(C_*(X), M).\] 

A {\it subfamily} of a family $\mF$ of subgroups of $G$ is another 
family $\mG\subseteq \mF$.  For a subfamily $\mG$ and a $G$-CW-complex 
$X$ with stabilizers in $\mF$, the $\mG$-singular set $X^{\mG\mhyph\sing}$ 
is the subcomplex consisting of points of $X$ whose stabilizer is 
not contained in $\mG$.  When $\mG$ consists of just the trivial 
subgroup this is the usual singular set and we write $X^\sing$ for 
$X^{\mG\mhyph\sing}$.  

Given a $\Z G$-module $M$ and a subfamily $\mG$ of $\mF$, we 
define two further $\orb$-modules: a submodule $\underline{M}_{\leq\mG}$ of 
$\underline{M}$, and the corresponding quotient module 
$\underline{M}_{> \mG}$.  These are defined by 
\[ \underline{M}_{>\mG}: 
 G /H \mapsto  
\left\{\begin{array}{lll} \Hom_G(\Z [G/H],M) & \mbox{if}&  H\notin\mG,\\
                    0            & \mbox{if}&  H\in\mG, \end{array}
\right.  \] 
\[ \underline{M}_{\leq\mG}: 
 G /H \mapsto  
\left\{\begin{array}{lll} 0 & \mbox{if}&  H\notin\mG,\\
                    \Hom_G(\Z [G/H],M)  & \mbox{if}&  H\in\mG. \end{array}
\right. \] 
By construction there is a short exact sequence of $\orb$-modules 
\[\underline{M}_{\leq\mG} \rightarrowtail \underline{M} 
\twoheadrightarrow \underline{M}_{> \mG}.\] 
Hence, there is a 
short exact sequence of cochain complexes  
\[0\to \Hom_\mF(C^\mF_*(X),\underline{M}_{\leq\mG})\to 
\Hom_\mF(C^\mF_*(X),\underline{M})\to 
\Hom_\mF(C^\mF_*(X),\underline{M}_{> \mG})\to 0\]
which gives rise to a long exact sequence in Bredon cohomology.  
By construction $$\Hom_\mF(C^\mF_*(X),\underline{M}_{> \mG})\cong
\Hom_\mF(C^\mF_*(X^{\mG\mhyph\sing}),\underline{M}),$$ 
and by adjointness isomorphism noted
earlier $$\Hom_\mF(C^\mF_*(X^{\mG\mhyph\sing}), \underline{M})\cong
\Hom_G(C_*(X^{\mG\mhyph\sing}), M).$$ 
It follows that there is a natural identification between Bredon 
cohomology with coefficients in $\underline{M}_{>\mG}$ and 
the equivariant cohomology of $X^{\mG\mhyph\sing}$ with 
coefficients in $M$: 
\[H_\mF^*(X;\underline{M}_{>\mG})\cong H_G^*(X^{\mG\mhyph\sing};M),\] 
we deduce that the Bredon cohomology 
with coefficients in $\underline{M}_{\leq \mG}$ is isomorphic 
to the equivariant cohomology of the pair $(X,X^{\mG\mhyph\sing})$ 
with coefficients in $M$: 
\[H_\mF^*(X;\underline{M}_{\leq\mG}) \cong H_G^*(X,X^{\mG\mhyph\sing};M).\]  
Hence we obtain the following.  

\begin{proposition} \label{bredon_prop}
Let $\mF$ be a family of subgroups of $G$, with $\mG$ a subfamily, 
let $X$ be any model for $E_\mF G$, and let $H$ be a finite-index
subgroup of $G$.  There exists an $\orb$-module module $\mathcal{C}$ 
such that the Bredon cohomology of the group $G$ with coefficients
in $\mathcal{C}$ computes the ordinary cohomology of the pair 
$(X/H,X^{\mG\mhyph\sing}/H)$: 
\[H_\mF^*(G;\mathcal{C}) \cong H^n(X/H,X^{\mG\mhyph\sing}/H;\Z).\] 
Furthermore, each abelian group $\mathcal{C}(G/K)$ is finitely 
generated.  
\end{proposition}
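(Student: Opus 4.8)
\emph{Proof plan.} The idea is to take for $M$ the permutation $\Z G$-module $\Z[G/H]$ and to set $\mathcal{C}:=\underline{M}_{\leq\mG}$ for this choice of $M$.  Note that this module depends only on $G$, $\mF$, $\mG$ and $H$, not on $X$, which is consistent with the fact that all models for $E_\mF G$ are equivariantly homotopy equivalent.  With this choice the identifications already assembled in this section reduce the statement to an Eckmann--Shapiro computation together with a short bookkeeping check.

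First, since $X$ is a model for $E_\mF G$, its augmented Bredon chain complex $C^\mF_*(X)$ is a projective resolution of $\underline{\Z}$ in $\orbmod$, so it computes $\mathrm{Ext}^*_{\orb}(\underline{\Z},-)$.  Feeding in $\mathcal{C}=\underline{M}_{\leq\mG}$ and then invoking the identification $H_\mF^*(X;\underline{M}_{\leq\mG})\cong H_G^*(X,X^{\mG\mhyph\sing};M)$ established just above, we obtain
\[H_\mF^*(G;\mathcal{C})\;\cong\;H_\mF^*(X;\underline{M}_{\leq\mG})\;\cong\;H_G^*\bigl(X,X^{\mG\mhyph\sing};\Z[G/H]\bigr).\]

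Next I would use that, since $H$ has finite index in $G$, the $\Z G$-module $\Z[G/H]$ is isomorphic to the coinduced module $\Hom_{\Z H}(\Z G,\Z)$.  The adjunction between restriction and coinduction, applied to the cellular chain complex of the pair, gives
\[\Hom_G\bigl(C_*(X,X^{\mG\mhyph\sing}),\Hom_{\Z H}(\Z G,\Z)\bigr)\;\cong\;\Hom_{\Z H}\bigl(C_*(X,X^{\mG\mhyph\sing}),\Z\bigr),\]
and hence $H_G^*(X,X^{\mG\mhyph\sing};\Z[G/H])\cong H_H^*(X,X^{\mG\mhyph\sing};\Z)$.  In the situation of interest $X$ is proper and $H$ is torsion-free, so $H$ acts freely on $X$ and on the subcomplex $X^{\mG\mhyph\sing}$; then $C_*(X,X^{\mG\mhyph\sing})$ is a complex of free $\Z H$-modules with $\Z\otimes_{\Z H}C_*(X,X^{\mG\mhyph\sing})\cong C_*(X/H,X^{\mG\mhyph\sing}/H)$, so $\Hom_{\Z H}(C_*(X,X^{\mG\mhyph\sing}),\Z)$ is just the ordinary cochain complex of the pair $(X/H,X^{\mG\mhyph\sing}/H)$.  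Composing the displayed isomorphisms gives $H_\mF^*(G;\mathcal{C})\cong H^*(X/H,X^{\mG\mhyph\sing}/H;\Z)$ in every degree.

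For the finiteness assertion I would unwind the definition of $\mathcal{C}=\underline{M}_{\leq\mG}$: one has $\mathcal{C}(G/K)=0$ when $K\notin\mG$, and $\mathcal{C}(G/K)=\Hom_G(\Z[G/K],\Z[G/H])\cong\Z[G/H]^K$ when $K\in\mG$.  As $[G:H]<\infty$, the $K$-set $G/H$ is finite, so $\Z[G/H]^K$ is free abelian of finite rank (the number of $K$-orbits on $G/H$), hence finitely generated.  I expect the one point that needs care to be the final isomorphism $H_H^*(X,X^{\mG\mhyph\sing};\Z)\cong H^*(X/H,X^{\mG\mhyph\sing}/H;\Z)$, which genuinely uses that $H$ acts freely on $X$; this is where the hypothesis that $G$ is virtually torsion-free and $X$ is proper enters, allowing $H$ to be chosen torsion-free and so free-acting.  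Everything else is a formal consequence of the adjunctions and short exact sequences already set up above.
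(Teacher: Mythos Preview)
Your approach is exactly the paper's: take $M=\Z[G/H]$, set $\mathcal{C}=\underline{M}_{\leq\mG}$, and run the chain of isomorphisms
\[
H_\mF^*(G;\mathcal{C})\cong H_\mF^*(X;\mathcal{C})\cong H_G^*(X,X^{\mG\mhyph\sing};\Z[G/H])\cong H_H^*(X,X^{\mG\mhyph\sing};\Z)\cong H^*(X/H,X^{\mG\mhyph\sing}/H;\Z).
\]
Your verification of the finite-generation claim is also correct (and more explicit than the paper's, which leaves it to the reader).

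The one flaw is your closing remark. You say the last isomorphism ``genuinely uses that $H$ acts freely on $X$'' and invoke hypotheses---$G$ virtually torsion-free, $X$ proper, $H$ chosen torsion-free---that are \emph{not} in the statement. The proposition is stated for an arbitrary family $\mF$ and an arbitrary finite-index subgroup $H$; in the paper's main application (the proof of Theorem~\ref{th:ksbq}) the subgroup $N$ playing the role of $H$ is not assumed torsion-free. Fortunately freeness is not needed: for any $\Z H$-module $C$ and any \emph{trivial} $H$-module $A$ one has the adjunction $\Hom_{\Z H}(C,A)\cong \Hom_\Z(C_H,A)$, where $C_H=\Z\otimes_{\Z H}C$ is the module of coinvariants. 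For an $H$-CW-pair the cellular chain complex is a complex of permutation modules, and taking coinvariants of a permutation module $\Z[H/K]$ gives $\Z$, so $C_*(X,X^{\mG\mhyph\sing})_H\cong C_*(X/H,X^{\mG\mhyph\sing}/H)$ regardless of whether the action is free. Hence $\Hom_{\Z H}(C_*(X,X^{\mG\mhyph\sing}),\Z)$ is the ordinary cochain complex of the quotient pair without any extra hypothesis. Drop the caveat and your argument is complete in the stated generality.
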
 

\begin{proof} 
Let $M$ be the permutation module $\Z [G/H]$, and let 
$\mathcal{C}:=\underline{M}_{\leq \mG}$.  Then
\begin{align*}H_\mF^*(G;\mathcal{C})&\cong H_\mF^*(X;\mathcal{C}) 
\cong H_G^*(X,X^{\mG\mhyph\sing};\Z [G/H]) \\
&\cong 
H_H^*(X,X^{\mG\mhyph\sing};\Z)\cong 
H^*(X/H,X^{\mG\mhyph\sing}/H;\Z),
\end{align*} 
where the first two isomorphisms follow from the discussion above
and the third because $H$ has finite index in $G$.  
\end{proof} 

\section{Right-angled Coxeter groups}

In this section we describe the results that we require concerning
right-angled Coxeter groups and the Davis complex.  The material up
to and including Corollary~\ref{corequiv} is standard; for more
details we refer the reader to \cite{davis2}~or~\cite{davis1}.

A {\it right-angled Coxeter system} consists of a group $W$ called a
{\it right-angled Coxeter group} together with a set $S$ of
involutions that generate $W$, subject to only the relations that
certain pairs of the generators commute.  We will always assume that
$S$ is finite.  The defining relators have the forms $s^2=1$ and
$stst=1$ where $s\in S$ and $t$ ranges over some subset of $S$
depending on $s$.  Since each relator has even length as a word in the
elements of $S$, one may define a group homomorphism from $W$ to the
cyclic group $\Z/2$ by $w\mapsto [l(w)]\in \Z/2$, where $l(w)\in \Z$
denotes the length of $w$ as a word in $S$, and $[l(w)]$ its image in
$\Z/2$.  The kernel of this homomorphism will be denoted $W^\ev$, and
consists of the elements of $W$ that are expressible as words of even
length in the elements of $S$.  The right angled Coxeter system
$(W,S)$ is determined by the graph $L^1(W,S)$ with vertex set $S$ and
edges those pairs of vertices that commute.  Equivalently, the
right-angled Coxeter system is determined by the flag complex $L(W,S)$
with vertex set $S$ and simplices the cliques in the graph $L^1(W,S)$.

Given a right-angled Coxeter system $(W, S)$, the {\it Davis complex}
$\Sigma(W, S)$ can be realized as either a cubical complex, or as a 
simplicial complex which is the barycentric subdivision of the cubical
complex.  The simplicial structure is easier to describe,
so we consider this first.  A {\it spherical} subset $T$ of $S$ is a
subset whose members all commute; equivalently $T$ is either the 
empty set, or a subset of $S$ that spans a simplex of $L(W,S)$.  
A {\sl special parabolic} subgroup of $W$ is the subgroup of $W$
generated by a spherical subset $T$.  We denote the special 
parabolic subgroup generated by $T$ by $W_T$.  A {\sl parabolic}
subgroup of $W$ is a conjugate of a special parabolic subgroup.  The set 
of cosets of all special parabolic subgroups forms a poset, ordered by 
inclusion, and the simplicial complex $\Sigma(W,S)$ is the 
realization of this poset.  By construction, $W$ acts admissibly
simplicially on $\Sigma(W,S)$ in such a way that each stabilizer
subgroup is parabolic.  

If $T$ is a spherical subset of $S$, then
the subposet of cosets contained in $W_T$ is
equivariantly isomorphic to the poset of faces of the standard
$|T|$-cube $[-1,1]^T$, with the group $W_T \cong C_2^T$
acting via reflections in the coordinate hyperplanes.   
In this way we obtain a cubical structure on $\Sigma(W,S)$, 
in which the $n$-dimensional subcubes correspond to cosets 
$wW_T$ with $|T|=n$.  The setwise stabilizer of the cube 
$wW_T$ is the parabolic subgroup $wW_Tw^{-1}$, which acts
on the cube in such a way that the natural generators 
$wtw^{-1}$ act as reflections in the coordinate hyperplanes.  
The simplicial complex described above is the barycentric 
subdivision of this cubical complex.  

If we view every simplicial complex as containing a unique 
$-1$-simplex corresponding to the empty subset of its vertex set, 
then we get a natural bijective correspondence between
the $W$-orbits of cubes in $\Sigma(W,S)$ and the 
simplices of $L(W,S)$ which preserves incidence (the 
empty simplex corresponds to the 0-cubes).  
Hence we obtain: 

\begin{proposition}\label{natbij} 
There is a natural bijection between subcomplexes of the 
simplicial complex $L(W,S)$ and non-empty $W$-invariant 
subcomplexes of the cubical complex $\Sigma(W,S)$.  
\end{proposition}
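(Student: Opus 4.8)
The plan is to work at the level of $W$-orbits of cubes and to use the incidence-preserving correspondence described just above. First I would reformulate that correspondence as an isomorphism of posets. The $W$-orbit of a cube $wW_T$ depends only on the spherical subset $T\subseteq S$, since $W$ acts transitively on the cosets of the fixed subgroup $W_T$, and distinct spherical subsets give distinct orbits because equal cosets have equal underlying subgroups and $W_T\cap S=T$. Together with the identification of spherical subsets of $S$ with the simplices of $L(W,S)$ (the empty set corresponding to the $(-1)$-simplex), this gives a bijection $\Phi$ from the set of $W$-orbits of cubes of $\Sigma(W,S)$ onto the set of simplices of $L(W,S)$, sending an $n$-cube orbit to an $(n-1)$-simplex; in particular $\Phi$ sends the unique orbit of $0$-cubes to the $(-1)$-simplex.

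Next I would check that $\Phi$ is an isomorphism of posets, where the left-hand side orders $W$-orbits by declaring $\mathcal{O}\leq\mathcal{O}'$ whenever some cube in $\mathcal{O}$ is a face of some cube in $\mathcal{O}'$, and the right-hand side is ordered by inclusion of simplices. The crucial point is to describe the faces of a cube $w'W_{T'}$ in the cubical structure on $\Sigma(W,S)$: they are exactly the cosets $w'vW_{T''}$ with $v\in W_{T'}$ and $T''\subseteq T'$. This follows because the poset of cosets contained in $W_{T'}$ is the face poset of the $|T'|$-cube, together with the standard fact about Coxeter groups that a special parabolic subgroup $W_{T''}$ is contained in $W_{T'}$ if and only if $T''\subseteq T'$, so that the cosets contained in $W_{T'}$ are precisely the $vW_{T''}$ with $v\in W_{T'}$ and $T''\subseteq T'$. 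Hence the $W$-orbit of any face of $w'W_{T'}$ is sent by $\Phi$ to a face of the simplex $\Phi(\mathcal{O}')$, and conversely every face of that simplex arises in this way. I expect this verification---the one place where genuinely Coxeter-theoretic input is needed, rather than the combinatorics of a single cube---to be the main point requiring care.

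Finally I would translate subcomplexes into down-closed subsets of these two isomorphic posets. A $W$-invariant subcomplex of $\Sigma(W,S)$ is exactly a union of $W$-orbits of cubes that is closed under passing to faces, that is, a down-closed subset of the orbit poset; applying $\Phi$ identifies such subsets, compatibly with inclusions, with the down-closed subsets of the simplex poset of $L(W,S)$, namely the subcomplexes of $L(W,S)$. It remains to reconcile the two notions of triviality. Since $W$ acts transitively on the vertices of $\Sigma(W,S)$, every non-empty $W$-invariant subcomplex contains all of them; hence the non-empty $W$-invariant subcomplexes correspond to the non-empty down-closed sets of orbits, and under $\Phi$ these are the down-closed sets of simplices containing the $(-1)$-simplex, which are precisely the subcomplexes of $L(W,S)$ in the convention adopted above. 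The resulting bijection plainly respects inclusions and commutes with the natural action of the automorphism group of $(W,S)$ on both sides, which is the sense in which it is natural.
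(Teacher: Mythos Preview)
Your proposal is correct and follows exactly the approach the paper indicates: the paper does not give a separate proof of this proposition but simply states it as an immediate consequence (``Hence we obtain'') of the incidence-preserving bijection between $W$-orbits of cubes in $\Sigma(W,S)$ and simplices of $L(W,S)$ described in the preceding paragraph. You have carefully unpacked that inference---verifying the orbit bijection, checking it is a poset isomorphism via the face description of cubes, and matching down-closed sets with subcomplexes while handling the empty/non-empty convention---which is precisely the content the paper leaves implicit.
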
 

To show that $\Sigma(W,S)$ is a model for $\ebar W$, metric 
techniques are helpful.  There is a natural CAT(0)-metric on 
$\Sigma(W,S)$, which is best understood in terms of the cubical 
structure.  The length of a
piecewise linear path in $\Sigma(W,S)$ is defined using the standard
Euclidean metric on each cube, and the distance between two points of
$\Sigma(W,S)$ is the infimum of the lengths of PL-paths connecting 
them.  According to Gromov's criterion~\cite{gromov}, $\Sigma(W,S)$ 
is locally CAT(0) because the link of every vertex is isomorphic to 
$L(W,S)$ which is a flag complex (see \cite{davis2, gromov}).  It is 
easy to see that $\Sigma(W,S)$ is simply connected (for example, because 
its 2-skeleton is a version of the Cayley complex for $W$), and 
it follows that $\Sigma(W,S)$ is CAT(0)~\cite[Theorem II.4.1]{BridHaef}.  
Given that $W$ acts isometrically with finite stabilizers on $\Sigma(W,S)$ 
it follows that $\Sigma(W,S)$ is a model for $\ebar W$ via the
Bruhat-Tits fixed point theorem~\cite[p.~179]{BridHaef}~or~\cite[Prop.~3]{BLN}.

\begin{lemma}\label{conjugacy} Every finite subgroup of $W$ is a
  subgroup of a parabolic subgroup of $W$. In particular,
  there are finitely many conjugacy classes of finite subgroups of $W$
  and every finite subgroup is isomorphic to a direct product
  $(\Z/2)^k$ for some $0\leq k \leq n$ where $n$ is the dimension of
  $\Sigma(W, S)$.  
\end{lemma}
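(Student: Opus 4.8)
The strategy is to exploit the CAT(0) geometry of the Davis complex just described. Let $F$ be a finite subgroup of $W$. Then $F$ acts by isometries on $\Sigma(W,S)$ with all orbits finite, hence bounded, so by the Bruhat--Tits fixed point theorem (already invoked in the discussion above) $F$ fixes a point $x\in\Sigma(W,S)$. Regarding $\Sigma(W,S)$ as a cubical complex, the point $x$ lies in the relative interior of a unique cube $\sigma$. For each $g\in F$ the cube $g\sigma$ has $gx=x$ in its relative interior, so $g\sigma=\sigma$ by uniqueness; thus $F$ is contained in the setwise stabilizer of $\sigma$.

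By the description of the cubical structure on $\Sigma(W,S)$ recalled above, $\sigma$ is a coset $wW_T$ for some $w\in W$ and some spherical subset $T\subseteq S$ with $|T|=\dim\sigma$, and its setwise stabilizer is the parabolic subgroup $wW_Tw^{-1}$. Hence $F\leq wW_Tw^{-1}$, which proves the first assertion. Conjugating, $w^{-1}Fw$ is a subgroup of $W_T\cong(\Z/2)^{|T|}$, so it is an elementary abelian $2$-group; since $|T|\leq\dim\Sigma(W,S)=n$ we conclude that $F\cong w^{-1}Fw\cong(\Z/2)^k$ for some $0\leq k\leq n$.

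For the remaining statement, the argument above shows that every finite subgroup of $W$ is conjugate into $W_T$ for one of the finitely many spherical subsets $T\subseteq S$ (there are at most $2^{|S|}$ of them), and each $W_T$, being a finite group, has only finitely many subgroups. Therefore $W$ has only finitely many conjugacy classes of finite subgroups. I do not anticipate a genuine obstacle: the only point needing a little care is the observation that the fixed point produced by the Bruhat--Tits theorem can be taken inside a single cube, so that its stabilizer is genuinely one of the parabolic subgroups whose structure was computed earlier; everything else is bookkeeping with the cubical $W$-action already in place.
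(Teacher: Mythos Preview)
Your argument is correct and follows essentially the same approach as the paper: both use the Bruhat--Tits fixed point theorem on the CAT(0) Davis complex to place a finite subgroup $F$ inside a point stabilizer, which is a parabolic subgroup $wW_Tw^{-1}$, and then read off the structural consequences. The only cosmetic difference is that you pass through the cubical structure (using the unique cube whose relative interior contains the fixed point), whereas the paper simply cites the earlier observation that all point stabilizers for the simplicial action are parabolic; these amount to the same thing.
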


\begin{proof} Let $F$ be a finite subgroup of $W$.  By the Bruhat-Tits
  fixed point theorem $F$ fixes some point of $\Sigma$, and hence $F$
  is a subgroup of a point stabilizer.  Every such subgroup is
  parabolic, and each is conjugate to one of the finitely many
  special parabolics.   
\end{proof} 

Recall that a group is said to be of {\it type $F$} if it admits a
compact classifying space. 

\begin{corollary} The commutator subgroup $W'$ of $W$ is a
  finite-index torsion-free subgroup of type $F$.    
\end{corollary}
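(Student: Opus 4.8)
The plan is to establish the three asserted properties of $W'=[W,W]$ in turn: finite index, torsion\mhyph freeness, and type $F$.

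\emph{Finite index.} First I would compute the abelianization of $W$ directly from the presentation. Each generator in $S$ has order two and each defining relator $stst$ becomes trivial once the generators are allowed to commute, so the abelianization $W/W'$ is the elementary abelian $2$\mhyph group $(\Z/2)^S$ of order $2^{|S|}$, with basis the images of the elements of $S$. Since $S$ is finite, $W'$ has finite index $2^{|S|}$ in $W$.

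\emph{Torsion\mhyph freeness.} The key observation is that for every spherical subset $T\subseteq S$ the special parabolic $W_T\cong (\Z/2)^T$ is carried injectively into $W/W'$ by the quotient map, because the images of the members of $T$ form part of the chosen basis of $(\Z/2)^S$; hence $W_T\cap W'=\{1\}$. Now suppose $g\in W'$ has finite order. Then $\langle g\rangle$ is a finite subgroup of $W$, so by Lemma~\ref{conjugacy} there is $x\in W$ with $x\langle g\rangle x^{-1}\subseteq W_T$ for some spherical $T$. As $W'$ is normal in $W$, the element $xgx^{-1}$ lies in $W_T\cap W'=\{1\}$, whence $g=1$. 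Thus $W'$ is torsion\mhyph free.

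\emph{Type $F$.} Here I would use the Davis complex. Recall from the discussion above that $\Sigma(W,S)$ is a CAT(0), hence contractible, cubical complex on which $W$ acts cellularly with finite stabilizers, and that the $W$\mhyph orbits of cubes correspond to the (finitely many) simplices of $L(W,S)$ together with the empty simplex; in particular the $W$\mhyph action is cocompact. Restricting the action to $W'$, it remains cocompact since $[W:W']<\infty$, and it is free because a point stabilizer in $W$ is finite, so its intersection with the torsion\mhyph free subgroup $W'$ is trivial. Therefore $\Sigma(W,S)\to W'\backslash\Sigma(W,S)$ is a covering map whose total space is simply connected (indeed contractible), so the compact CW\mhyph complex $W'\backslash\Sigma(W,S)$ is a classifying space $BW'$, and $W'$ is of type $F$.

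The argument is largely routine once Lemma~\ref{conjugacy} and the contractibility and cocompactness of $\Sigma(W,S)$ are in hand; the one step that genuinely uses the internal structure of $W$ rather than soft finiteness statements is the torsion\mhyph freeness claim, which combines the conjugacy\mhyph of\mhyph finite\mhyph subgroups result with the embedding of each special parabolic into the abelianization. I do not expect any serious obstacle beyond making that step precise.
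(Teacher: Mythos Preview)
Your proof is correct and follows essentially the same route as the paper: compute the abelianization as $(\Z/2)^S$, observe that every (special) parabolic injects into it, and use this together with the cocompact action on the contractible Davis complex $\Sigma$ to conclude that $W'$ acts freely and cocompactly, giving a compact $K(W',1)$. The only organizational difference is that the paper deduces torsion\mhyph freeness as a consequence of the free action on $\Sigma$, whereas you establish torsion\mhyph freeness first via Lemma~\ref{conjugacy} and then use it to show the action is free; since Lemma~\ref{conjugacy} itself is proved using fixed points in $\Sigma$, the two arguments have the same content.
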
 

\begin{proof} 
The abelianization of $W$ is naturally isomorphic to $C_2^S$.  
Every parabolic subgroup of $W$ maps injectively into $C_2^S$.  
It follows that $W'$ acts freely on the finite-dimensional 
contractible space $\Sigma$.  Hence $\Sigma/W'$ is a compact 
$K(W',1)$, from which it follows that $W'$ is both type $F$ and 
torsion-free.  
\end{proof}

\begin{lemma}[\cite{BLN}]\label{modW} The quotient of the pair
  $(\Sigma, \Sigma^\sing)$ by $W$ is isomorphic to the pair
  $(CL', L')$, i.e., the pair consisting of the cone on the 
barycentric subdivision of $L$ and its base.  This isomorphism is 
natural for automorphisms of $L$.  If $L$ is acyclic 
then so is $\Sigma^\sing$.  If $L$ is simply-connected, then 
so is $\Sigma^\sing$.  
\end{lemma}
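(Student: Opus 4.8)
Throughout I would use the simplicial description of $\Sigma=\Sigma(W,S)$ as the order complex of the poset $\mathcal P$ of cosets $wW_T$ of special parabolic subgroups, where $T$ ranges over the spherical subsets of $S$ (including $T=\emptyset$), with $W$ acting by left translation and $\mathrm{Stab}_W(wW_T)=wW_Tw^{-1}$.

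\textbf{Step 1 (the quotient pair).} Since the $W$-action on $\Sigma$ is admissible, $\Sigma/W$ is the realization of the quotient poset $\mathcal P/W$, and I would first check that $\mathcal P/W$ is isomorphic to the poset of spherical subsets of $S$ ordered by inclusion, the orbit of $wW_T$ corresponding to $T$ (that distinct special parabolics are non-conjugate follows from the fact that each parabolic injects into the abelianization $C_2^S$). The poset of spherical subsets has $\emptyset$ as least element, so its order complex is the cone on the order complex of the \emph{non-empty} spherical subsets, which is exactly the barycentric subdivision $L'$ of $L$; hence $\Sigma/W\cong CL'$, with cone point the image of the $0$-cubes $wW_\emptyset$. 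Next I would compute the stabilizer of an interior point of a simplex $w_0W_{T_0}<\cdots<w_kW_{T_k}$ of $\Sigma$: the containments $w_0W_{T_0}\subseteq\cdots\subseteq w_kW_{T_k}$ force $T_0\subseteq\cdots\subseteq T_k$ and $w_i^{-1}w_0\in W_{T_i}$, and since $W_{T_i}\cong C_2^{T_i}$ is abelian, conjugation by $w_i^{-1}w_0$ fixes the subgroup $W_{T_0}$, so $w_iW_{T_i}w_i^{-1}\supseteq w_iW_{T_0}w_i^{-1}=w_0W_{T_0}w_0^{-1}$ for all $i$; thus this stabilizer, which by admissibility is the intersection of the vertex stabilizers, equals $w_0W_{T_0}w_0^{-1}$, non-trivial precisely when $T_0\ne\emptyset$. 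Consequently $\Sigma^\sing$ is the full subcomplex of $\Sigma$ on the vertices $wW_T$ with $T\ne\emptyset$, and under $\Sigma/W\cong CL'$ it is carried onto the order complex of the non-empty spherical subsets, i.e.\ onto the base $L'$ of the cone. Every ingredient here is functorial in the right-angled Coxeter system, hence in $L$, which gives the asserted naturality.

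\textbf{Step 2 (acyclicity and simple-connectivity of $\Sigma^\sing$).} For the last two assertions I would use the chamber decomposition. Let $\overline{\mathrm{st}}(w)$ be the closed star in $\Sigma$ of a $0$-cube $w=wW_\emptyset$. As recalled above, the link in $\Sigma$ of each $0$-cube is isomorphic to $L$, so $\overline{\mathrm{st}}(w)$ is a cone on a copy of $L'$; and from the poset description one reads off $\overline{\mathrm{st}}(w)\cap\Sigma^\sing=\mathrm{lk}(w)\cong L'$, $\overline{\mathrm{st}}(w)\cap\overline{\mathrm{st}}(w')\subseteq\Sigma^\sing$ for $w\ne w'$, and $\Sigma=\Sigma^\sing\cup\bigcup_{w\in W}\overline{\mathrm{st}}(w)$. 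Collapsing $\Sigma^\sing$ then exhibits $\Sigma/\Sigma^\sing$ as the wedge $\bigvee_{w\in W}\big(\overline{\mathrm{st}}(w)/\mathrm{lk}(w)\big)$, a wedge of (unreduced) suspensions of $L$, whence
\[
H_n(\Sigma,\Sigma^\sing)\;\cong\;\bigoplus_{w\in W}\widetilde H_{n-1}(L).
\]
Since $\Sigma$ is contractible (it is a model for $\ebar W$), the long exact sequence of the pair $(\Sigma,\Sigma^\sing)$ collapses to isomorphisms $\widetilde H_n(\Sigma^\sing)\cong\bigoplus_{w\in W}\widetilde H_n(L)$ for all $n$. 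In particular $\Sigma^\sing$ is acyclic when $L$ is, and connected whenever $L$ is. For the fundamental group I would note that $\Sigma$ is obtained from $\Sigma^\sing$ by coning off the connected subcomplexes $\mathrm{lk}(w)\cong L'$, one for each $w\in W$; if $L$ is simply connected then so is each $\mathrm{lk}(w)$, and applying van Kampen's theorem one cone at a time shows that $\Sigma^\sing\hookrightarrow\Sigma$ induces an isomorphism $\pi_1(\Sigma^\sing)\cong\pi_1(\Sigma)=1$.

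\textbf{Main obstacle.} The step I expect to need the most care is Step 1: the precise identification of the orbit poset $\mathcal P/W$ and, above all, of the subcomplex $\Sigma^\sing$. Both rest on the stabilizer computation for an arbitrary simplex, whose crux is that the special parabolics $W_T\cong C_2^{T}$ of a right-angled Coxeter group are abelian. Once $\Sigma^\sing$ has been pinned down, Step 2 is essentially formal: the only geometric input beyond the contractibility of $\Sigma$ is the standard fact, already recalled, that every vertex link in the Davis complex is a copy of $L$.
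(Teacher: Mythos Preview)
Your proposal is correct and follows essentially the same approach as the paper: identify $\Sigma^{\sing}$ as the full subcomplex on vertices $wW_T$ with $T\neq\emptyset$ (the paper simply declares Step~1 ``clear from the simplicial description''), and then observe that $\Sigma$ is obtained from $\Sigma^{\sing}$ by attaching, for each $w\in W$, a cone $CL'$ along its base $L'$, so that when $L$ is acyclic (resp.\ simply connected) these attachments do not alter homology (resp.\ $\pi_1$). Your Step~2 is a mildly more explicit variant---you compute $H_*(\Sigma,\Sigma^{\sing})$ via the wedge decomposition rather than directly invoking that coning off an acyclic subspace is a homology equivalence---but the content is the same.
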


\begin{proof} 
The first part is clear from the simplicial description of $\Sigma$.  
Now let $V$ be the unique free $W$-orbit of vertices in the simplicial
description of $\Sigma$.  The star of each $v\in V$ is a copy 
of the cone $CL'$, with $v$ as its apex.  The subcomplex of 
$\Sigma$ consisting of all simplices not containing any vertex 
of $V$ is $\Sigma^\sing$.  Hence $\Sigma$ is obtained from 
$\Sigma^\sing$ by attaching cones to countably many subcomplexes
isomorphic to $L'$.  

In the case when $L$ is acyclic, attaching a cone to a copy 
of $L'$ does not change homology.  It follows that $\Sigma^\sing$ 
must be acyclic since $\Sigma$ is.  Similarly, if $L$ is 
simply-connected, then attaching a cone to a copy of $L'$ does
not change the fundamental group, so $\Sigma^\sing$ must be 
simply-connected since $\Sigma$ is.  
\end{proof}

Now suppose a finite group $Q$ acts by automorphisms on $L(W,S)$.  
This defines an action of $Q$ on $W$, and hence a semidirect product 
$G=W\semi Q$.  

\begin{lemma} \label{equivariant} 
There is an admissible simplicial $G$-action on $\Sigma(W,S)$ 
extending the action of $W$, and $\Sigma(W,S)$ becomes a 
cocompact model for $\ebar G$.  
\end{lemma}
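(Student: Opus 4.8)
The plan is to read off the $G$-action from the poset model of $\Sigma(W,S)$, verify admissibility by a rank argument, and then obtain everything about $\ebar G$ by combining the CAT(0) fixed-point machinery already available for $W$ with the fact that $W$ has finite index in $G$. First I would construct the action. Each $q\in Q$ permutes $S$ preserving the commuting relations, hence carries spherical subsets to spherical subsets and induces an automorphism of $W$ with $q(W_T)=W_{q(T)}$; therefore $q$ acts on the poset of cosets $wW_T$ of special parabolic subgroups by $wW_T\mapsto q(w)W_{q(T)}$, which is an order automorphism and so realizes to a simplicial automorphism of $\Sigma(W,S)$. Assembling this with the left-multiplication action of $W$ gives an action of $G=W\semi Q$: one checks directly that $qwq^{-1}$ acts as left multiplication by $q(w)$, matching the semidirect-product relation, so the action is well defined, simplicial, and extends that of $W$.

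For admissibility I would observe that the assignment $wW_T\mapsto |T|$ is $G$-invariant, since left multiplication does not alter $T$ and $|q(T)|=|T|$. A simplex of $\Sigma(W,S)$ is a chain of such cosets along which this rank strictly increases, so an element $g\in G$ stabilizing the simplex setwise permutes a set of cosets with pairwise distinct ranks and must fix each one, hence fixes the simplex pointwise. Note this requires no admissibility hypothesis on the $Q$-action on $L$: even when $q$ permutes the vertices of a simplex $T$ of $L$ nontrivially, it still sends the vertex $W_T$ of $\Sigma(W,S)$ to $W_{q(T)}$, so passing to the barycentric subdivision absorbs such flips. Admissibility makes the simplicial structure a $G$-CW structure, and for any $F\leq G$ it makes the fixed-point set $\Sigma(W,S)^F$ a subcomplex that agrees with the metric fixed-point set, because a point fixed by $F$ lies in the interior of a simplex that $F$ must then fix setwise and hence pointwise.

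It remains to see that $\Sigma(W,S)$ is a cocompact model for $\ebar G$. Each $q\in Q$ maps the Euclidean cube indexed by $wW_T$ isometrically onto the one indexed by $q(w)W_{q(T)}$ and respects the identifications, so $G$ acts by isometries on the complete CAT(0) space $\Sigma(W,S)$; every cube stabilizer, being an extension of a finite parabolic subgroup by a subgroup of $Q$, is finite, so the action is proper. The $W$-orbits of cubes correspond to the finitely many simplices of $L(W,S)$, so $\Sigma(W,S)/W$ is compact, and since $[G:W]=|Q|<\infty$ the quotient $\Sigma(W,S)/G$ is compact too. Finally, for a finite $F\leq G$ the set $\Sigma(W,S)^F=\bigcap_{f\in F}\Sigma(W,S)^f$ is an intersection of convex subsets of a CAT(0) space, hence convex, and it is nonempty by the Bruhat--Tits fixed-point theorem applied to the bounded $F$-orbits; a nonempty convex subset of a CAT(0) space is contractible, so $\Sigma(W,S)^F$ is contractible. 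Thus $\Sigma(W,S)$ is a cocompact model for $\ebar G$.

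The step I expect to demand the most care is the bookkeeping around the $G$-CW structure: checking that admissibility genuinely holds in this generality and that the fixed-point subcomplexes it produces coincide with the metric fixed-point sets used in the CAT(0) argument. Everything else is an assembly of facts already proved for the $W$-action, together with the elementary observation that $W$ has finite index in $G$.
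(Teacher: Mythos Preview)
Your proof is correct and follows essentially the same route as the paper's, which is extremely terse: it defines the $Q$-action on the poset by $wW_T\mapsto q(w)W_{q(T)}$, asserts that combined with the $W$-action this yields an admissible $G$-action, and then invokes the CAT(0) structure with finite stabilizers (via Bruhat--Tits) to conclude. Your rank argument for admissibility and your verification that stabilizers are finite and the action cocompact simply fill in details the paper leaves implicit; your observation that admissibility of $Q$ on $L$ is not needed here is correct and is consistent with the paper, which only imposes that hypothesis later in Remark~\ref{remstabs} and Theorem~\ref{th:ksbq}.
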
 

\begin{proof} The action of $Q$ on the poset underlying $\Sigma(W,S)$ 
is defined in such a way that $q\in Q$  
sends the coset $wW_T$ to the coset $q(w)W_{q(T)}$.  This combines 
with the $W$-action to give an admissible $G$-action on $\Sigma(W,S)$.  
Since $\Sigma(W,S)$ is CAT(0) and the stabilizers are finite it 
follows that $\Sigma(W,S)$ is a model for $\ebar G$. 
\end{proof} 

\begin{corollary}\label{corequiv} 
Any finite-index subgroup $H$ of $G$ as above admits a cocompact 
model for $\ebar H$ and is virtually torsion-free. 
\end{corollary}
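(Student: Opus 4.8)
The plan is to show that the very same complex $\Sigma=\Sigma(W,S)$ which serves as a cocompact model for $\ebar G$ in Lemma~\ref{equivariant} also serves, after restricting the $G$-action to $H$, as a cocompact model for $\ebar H$, and then to produce a torsion-free finite-index subgroup of $H$.

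First I would invoke the general principle that a model for $\ebar G$ restricts to a model for $\ebar K$ for any subgroup $K\leq G$. Indeed, the stabilizer in $K$ of a cell is the intersection of its $G$-stabilizer with $K$, hence still finite, so the restricted $K$-action is proper; and for any finite subgroup $F\leq K$ one has $F\leq G$, so $\Sigma^F$ is contractible by the defining property of $\ebar G$. Applying this with $K=H$ shows that $\Sigma$, with the restricted action, is a model for $\ebar H$.

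Next comes cocompactness. Since $\Sigma/G$ is compact, for each dimension the $G$-action on the cells of that dimension has only finitely many orbits. As $[G:H]<\infty$, each such $G$-orbit of cells decomposes into at most $[G:H]$ orbits under $H$, so the $H$-action on $\Sigma$ still has finitely many orbits of cells in each dimension, and $\Sigma$ is finite-dimensional; hence $\Sigma/H$ is compact. Finally, for virtual torsion-freeness, recall from the Corollary following Lemma~\ref{conjugacy} that the commutator subgroup $W'$ is torsion-free and of finite index in $W$; since $Q$ is finite, $W$ has finite index in $G=W\semi Q$, so $W'$ is a torsion-free finite-index subgroup of $G$. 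Then $H\cap W'$ has finite index in $H$ and, being contained in $W'$, is torsion-free, so $H$ is virtually torsion-free.

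I do not expect any genuine obstacle here: the only points needing a word of care are the verification that passing from $\ebar G$ to $\ebar H$ preserves both properness and the fixed-point condition, and that it is precisely the finiteness of the index that keeps the quotient compact. Both are immediate from the definitions together with the structural facts already established in Lemmas~\ref{conjugacy}--\ref{equivariant} and the corollaries in between.
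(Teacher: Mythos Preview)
Your proposal is correct and matches the paper's intent: the paper states Corollary~\ref{corequiv} without proof, treating it as an immediate consequence of Lemma~\ref{equivariant} and the earlier corollary on $W'$, and you have filled in exactly the standard details one would expect (restriction of $\ebar G$ to a subgroup, finite-index preservation of cocompactness, and intersecting with the torsion-free finite-index subgroup $W'$).
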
 

\begin{remark}\label{remstabs} 
For the action of $G$ on $\Sigma$, the stabilizer of the vertex $W_T$ 
is the semidirect product $W_T\semi Q_T$, where $Q_T:=\{q\in Q: q(T)=T\}$.  
If $Q$ acts admissibly on $L$ then $Q_T$ fixes each element of $T$ and
the stabilizer is the direct product $W_T\times Q_T$.  Similarly, the 
stabilizer of the vertex $wW_T$ is the direct product
$wW_Tw^{-1}\times wQ_Tw^{-1}$.  Note in particular that the image 
of the stabilizer under the quotient map $G\to G/W\cong Q$ depends only 
on $T$, and not on $w$.  
\end{remark}

\begin{lemma} \label{mainlem}
Let $N$ be a finite index normal subgroup of $W^\ev$.  There is an
isomorphism $\psi$ from the relative chain complex $C_*(CL',L')$ to a
direct summand of the simplicial chain complex $C_*(\Sigma/N)$.  This
isomorphism is natural for automorphisms of $L$ that preserve $N$.  It
is also natural for the inclusion of subcomplexes in $L$ and the
corresponding $W/N$-invariant subcomplexes of the cubical structure on
$\Sigma/N$.
\end{lemma}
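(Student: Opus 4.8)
The plan is to realise the desired summand as the chains carried by the star of a single free vertex of $\Sigma/N$, after correcting the naive lift into a chain map. Fix the vertex $v_0$ of $\Sigma$ corresponding to the coset $W_\emptyset$; it lies in the free $W$-orbit of vertices and is fixed by every automorphism of $L$. By the $v_0$-instance of Lemma~\ref{modW}, its closed star $C_e$ in the simplicial complex $\Sigma$ is the cone on $L'$ with apex $v_0$, and $C_e\cap\Sigma^\sing=L'$ is its base. Since the $W$-stabiliser, hence the $N$-stabiliser, of $v_0$ is trivial, and since a simplex of $\Sigma$ contains at most one free vertex, the quotient map $p\colon\Sigma\to\Sigma/N$ is injective on the set of simplices of $C_e$ that contain $v_0$. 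Moreover every simplex of $\Sigma$ either contains a free vertex — in which case it lies in the interior of a unique chamber $wC_e$ — or else lies in $\Sigma^\sing$; so the simplices of $\Sigma/N$ split into those of $\Sigma^\sing/N$ together with the interior simplices of the $[W:N]$ chambers. Let $Y\subseteq\Sigma/N$ be the subcomplex spanned by every simplex except the interior simplices of $p(C_e)$, that is, the union of $\Sigma^\sing/N$ with the other chambers. Collapsing $Y$ yields a surjection of chain complexes $q\colon C_*(\Sigma/N)\to C_*(\Sigma/N)/C_*(Y)$, and the quotient is canonically isomorphic to $C_*(CL',L')$, the interior simplices of $p(C_e)$ corresponding to the relative simplices of $(CL',L')$.

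Next I would correct the naive section. The set-theoretic section $\psi_0\colon C_*(CL',L')\to C_*(\Sigma/N)$ sending the relative simplex $v_0*\tau$ to $p(v_0*\tau)$ satisfies $q\psi_0=\mathrm{id}$ but is not a chain map, because $\partial\,p(v_0*\tau)$ contains the face $p(\tau)$, which lies in $\Sigma^\sing/N\subseteq C_*(Y)$. The defect $E:=\partial\psi_0-\psi_0\partial$ is a degree $-1$ map $C_*(CL',L')\to C_*(Y)$ satisfying $\partial E+E\partial=0$; it represents the connecting homomorphism of $0\to C_*(Y)\to C_*(\Sigma/N)\xrightarrow{q}C_*(CL',L')\to 0$, and under the natural identification $C_*(CL',L')\cong\widetilde C_{*-1}(L')$ it is simply the map induced by the inclusion $L'=C_e\cap\Sigma^\sing\hookrightarrow\Sigma^\sing/N$ (and zero on the augmentation term). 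If $E$ is null-homotopic, say $E=\partial\psi_1-\psi_1\partial$, then $\psi:=\psi_0+\psi_1$ is a chain map with $q\psi=\mathrm{id}$; hence $C_*(\Sigma/N)=\psi\bigl(C_*(CL',L')\bigr)\oplus\ker q$ with $\ker q=C_*(Y)$, and $\psi$ is an isomorphism of $C_*(CL',L')$ onto this direct summand, with retraction $q$. This reduces the lemma to the null-homotopy of $E$.

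That null-homotopy is the step I expect to be the \emph{main obstacle}, and it is where the hypothesis $N\leq W^\ev$ is used. Every reflection of $W$ has odd word length, so no reflection lies in $W^\ev$, and hence none lies in $N$; consequently each codimension-one mirror of $\Sigma/N$ separates two distinct chambers, so every codimension-one face of the chamber $p(C_e)$ is shared with a distinct adjacent chamber. Since adjacent chambers are images of cones, such a face bounds explicitly, at the chain level, inside an adjacent chamber contained in $Y$. I would then assemble these bounding chains into a chain contraction of $E$, using that the adjacent chambers meet one another, and meet $L'$, only in subcomplexes governed combinatorially by $L$, which lets the bounding chains be chosen compatibly over the nerve of the covering of $L'$ by these faces — this compatibility is the real content of the step. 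In the situation of the main theorems, where $L$ is acyclic, it is immediate: then $C_*(CL',L')\cong\widetilde C_{*-1}(L')$ is an acyclic bounded complex of free abelian groups, hence contractible, so every chain map out of it — in particular $E$ — is null-homotopic with no further argument, and one may take $\rho=q$.

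Finally, naturality is built in and would be checked last. An automorphism of $L$ normalising $N$ induces an automorphism of $\Sigma/N$ fixing the coset $W_\emptyset=v_0$, hence preserving $C_e$, its base $L'$, the subcomplex $Y$, the section $\psi_0$ and the defect $E$; choosing the null-homotopy $\psi_1$ out of the canonical data above (in the acyclic case, the canonical contraction of $\widetilde C_{*-1}(L')$) makes $\psi$ natural for such automorphisms. For a subcomplex $L_0\subseteq L$, the $W/N$-invariant cubical subcomplex of $\Sigma/N$ corresponding to it via Proposition~\ref{natbij} meets $p(C_e)$ in the cone on $L_0'$, and the whole construction for $L_0$ is obtained by restriction from that for $L$; this yields the stated naturality for inclusions of subcomplexes.
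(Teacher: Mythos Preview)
Your retraction $q$ is exactly the paper's splitting map $\phi$, so that half is fine.  The gap is in the section.  The lemma is stated for an arbitrary finite flag complex $L$, and the non-acyclic case is genuinely used later in the paper (to bound $\vcd G$ from below one applies the lemma to $W_K$ with $K=L^\sing$, which need not be acyclic).  You prove only the acyclic case cleanly; for general $L$ you correctly isolate the problem---finding a null-homotopy of the inclusion $E\colon \widetilde C_*(L')\to C_*(Y)$---but your proposed construction (``bound each codimension-one face in the adjacent chamber and assemble compatibly'') is not carried out, and in fact adjacent chambers alone are not enough: overlapping faces lie in different adjacent chambers and the conings do not match.  There is also a naturality problem even in the acyclic case: you invoke ``the canonical contraction of $\widetilde C_{*-1}(L')$'', but no such canonical object exists, and for a non-equivariant contraction $h$ the resulting $\psi_1=Eh$ need not be natural for automorphisms of $L$, nor need it restrict correctly to subcomplexes $L_0\subseteq L$.

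The paper sidesteps all of this with a one-line formula that you are missing.  Choose a transversal $w_1,\dots,w_m$ to $N$ in $W$ and set
\[
\psi(\sigma)=\sum_{i=1}^m(-1)^{l(w_i)}\,w_i\sigma,
\]
for $\sigma$ a simplex of $CL'$.  The hypothesis $N\leq W^\ev$ makes this independent of the transversal.  The crucial point is that $\psi$ \emph{vanishes} on simplices $\sigma=(T_0<\cdots<T_r)$ with $T_0\neq\emptyset$: any non-trivial special parabolic $W_{T_0}$ contains equal numbers of even- and odd-length elements, so the signed sum over the $W_{T_0}$-cosets inside each $N$-coset cancels.  Hence $\psi$ descends to a chain map $C_*(CL',L')\to C_*(\Sigma/N)$, and $\phi\psi=\mathrm{id}$ since only the $w_i=e$ summand survives under $\phi$.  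This formula is manifestly natural for automorphisms of $L$ preserving $N$ and for inclusions of subcomplexes, with no choices to be made.  In your language, the correction $\psi_1$ is the sum over \emph{all} non-identity chambers with alternating signs, not just the adjacent ones; that global averaging is what makes the compatibilities automatic.
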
 

\begin{proof} 
The cone $CL'$ is the realization of the poset of spherical 
subsets of $S$, with cone point the empty set $\emptyset$.  
For $\sigma$ a simplex of $CL'$, $\psi(\sigma)$ in  
$C_*(\Sigma/N)$ will be the signed sum of its $|W/N|$ 
inverse images under the map $\Sigma/N\rightarrow 
\Sigma/W=CL'$.  The signs will ensure that simplices 
of $CL'$ that do not contain $\emptyset$ as a vertex map 
to zero.  

In more detail, fix a transversal $w_1,\ldots,w_m$ to 
$N$ in $W$, and for $\sigma$ a simplex of $CL'$, viewed 
as a chain $\sigma=(T_0<T_1<\cdots<T_r)$ of spherical subsets, 
define 
\[\psi(\sigma) = \sum_{i=1}^m (-1)^{l(w_i)} w_i\sigma 
= \sum_{i=1}^m (-1)^{l(w_i)} (w_iW_{T_0}<\cdots<w_iW_{T_r}).\] 
Here $l(w)$ denotes the length of $w$ as a word in $S$.  For 
any $n\in N$, $l(wn)-l(w)$ is even, and so the sum above 
does not depend on the choice of transversal.  The above 
formula clearly describes a chain map from $C_*(CL')$ to 
$C_*(\Sigma/N)$.  Now if $T$ is a non-empty spherical subset
of $S$, $W_T$ contains equal numbers of words of odd and 
even length, and hence so does its image $W_T/(W_T\cap N)\leq W/N$.
Equivalently, any transversal to $W_T\cap N$ in $W_T$ contains equal
numbers of words of odd and even length.  
It follows that if $T_0\neq \emptyset$, then $\psi(\sigma)=0$. 
Hence the formula given above defines a chain map 
$\psi: C_*(CL',L')\rightarrow C_*(\Sigma/N)$.  This 
clearly has the claimed naturality properties.  

It remains to exhibit a splitting map $\phi: C_*(\Sigma/N)\rightarrow
C_*(CL',L')$.  This uses a `simplicial excision map'.  Let $v$ be the
image of $W_\emptyset\in\Sigma$ in $\Sigma/N$, and let $X$ be the
subcomplex of $\Sigma/N$ consisting of all simplices that do not have
$v$ as a vertex.  There is a natural bijection between simplices of
$CL'$ containing the cone vertex and simplices of $\Sigma/N$
containing $v$.  This induces an isomorphism $C_*(\Sigma/N,X) \cong
C_*(CL',L')$ and $\phi$ is defined as the composite of this with the
map $C_*(\Sigma/N)\rightarrow C_*(\Sigma/N,X)$.  To check that
$\phi\circ\psi$ is the identity map on $C_*(CL',L')$, let
$\sigma=(T_0<T_1<\cdots<T_r)$ be any $r$-simplex of $CL'$.  If
$T_0\neq \emptyset$ then we already know that
$\phi\circ\psi(\sigma)=\phi(0)=0$, and so the given formula for
$\phi\circ\psi$ does define a self-map of $C_*(CL',L')$.  On the other
hand, if $T_0=\emptyset$ then $\psi(\sigma)$ contains $m=|W:N|$
distinct signed simplices, exactly one of which has $v=W_\emptyset$ as
a vertex rather than some other coset of $W_\emptyset$; furthermore
this simplex appears with sign $+1$.  It follows that in this case
$\phi\circ\psi(\sigma)=\sigma$, confirming that $\phi\circ\psi$ is
the identity map of $C_*(CL',L')$.  
\end{proof} 

\begin{corollary}\label{cormainlem} 
With notation as above, let $K$ be a subcomplex of $L$, and 
let $\Sigma(K)$ be the (barycentric subdivision of the)  
cubical $W_L$-subcomplex of $\Sigma$ associated to $K$.  There 
is a natural isomorphism $\psi$ from the relative chain complex 
$C_*(CL',L'\cup CK')$ to a direct summand of the relative
simplicial chain complex $C_*(\Sigma/N,\Sigma(K)/N)$.  
\end{corollary}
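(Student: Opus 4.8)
The plan is to deduce the corollary from Lemma~\ref{mainlem} by restricting the maps $\psi$ and $\phi$ of that lemma to the relevant subcomplexes and then passing to cokernels. Write $\psi\colon C_*(CL',L')\to C_*(\Sigma/N)$ and $\phi\colon C_*(\Sigma/N)\to C_*(CL',L')$ for the chain maps of Lemma~\ref{mainlem}, so that $\phi\circ\psi=\mathrm{id}$. Since $\Sigma(K)$ is a $W$-invariant, hence $N$-invariant, subcomplex of $\Sigma$ (Proposition~\ref{natbij}), $\Sigma(K)/N$ is a genuine subcomplex of $\Sigma/N$, so $j\colon C_*(\Sigma(K)/N)\hookrightarrow C_*(\Sigma/N)$ is a degreewise split inclusion with cokernel $C_*(\Sigma/N,\Sigma(K)/N)$. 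On the simplicial side, inside $CL'$ one has $CK'\cap L'=K'$ (a chain of spherical subsets lies in $CK'$ iff all its terms are simplices of $K$, and lies outside $L'$ iff its smallest term is the cone vertex $\emptyset$), so simplicial excision identifies $C_*(L'\cup CK',L')$ with $C_*(CK',K')$; consequently the inclusion $i\colon C_*(CK',K')\hookrightarrow C_*(CL',L')$ induced by $CK'\hookrightarrow CL'$ is degreewise split, with image spanned by those simplices of $CL'$ that contain the cone vertex and all of whose remaining vertices correspond to simplices of $K$, and with cokernel $C_*(CL',L'\cup CK')$.

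The key point is that $\psi$ and $\phi$ respect these subcomplexes. The naturality of $\psi$ for subcomplex inclusions (part of Lemma~\ref{mainlem}) says precisely that $\psi$ carries $i\big(C_*(CK',K')\big)$ into $j\big(C_*(\Sigma(K)/N)\big)$; let $\psi_K$ denote the resulting restriction, so $\psi\circ i=j\circ\psi_K$. For $\phi$ I would use its explicit description from the proof of Lemma~\ref{mainlem}, as simplicial excision at the vertex $v$ followed by the canonical identification: a chain supported on $\Sigma(K)/N$ is sent, after killing the simplices not containing $v$, to a chain supported on the simplices of $\Sigma(K)/N$ that contain $v$, and these correspond exactly to the simplices of $CK'$ containing the cone vertex, i.e.\ to a basis of $C_*(CK',K')$. (Here one uses that $v$ lies in $\Sigma(K)/N$, which holds because the $W$-invariant subcomplex $\Sigma(K)$ contains every $0$-cube of $\Sigma$.) Thus $\phi\circ j=i\circ\phi_K$ for a chain map $\phi_K$ in the reverse direction, and from $\phi\circ\psi=\mathrm{id}$ together with injectivity of $i$ it follows formally that $\phi_K\circ\psi_K=\mathrm{id}$.

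Finally, since $\psi$ maps $i(C_*(CK',K'))$ into $j(C_*(\Sigma(K)/N))$ and $\phi$ maps the latter back into the former, both descend to the cokernels of $i$ and $j$, giving chain maps
\[
\bar\psi\colon C_*(CL',\,L'\cup CK')\longrightarrow C_*(\Sigma/N,\,\Sigma(K)/N),\qquad
\bar\phi\colon C_*(\Sigma/N,\,\Sigma(K)/N)\longrightarrow C_*(CL',\,L'\cup CK'),
\]
and $\bar\phi\circ\bar\psi$ is induced by $\phi\circ\psi=\mathrm{id}$, hence is the identity. Therefore $\bar\psi$ is a split injection of chain complexes, that is, a chain isomorphism of $C_*(CL',L'\cup CK')$ onto the direct summand $\bar\psi\big(C_*(CL',L'\cup CK')\big)$ of $C_*(\Sigma/N,\Sigma(K)/N)$, with $\ker\bar\phi$ as a complementary summand. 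The asserted naturality — for automorphisms of $L$ that preserve both $N$ and $K$, and for further inclusions of subcomplexes — is inherited directly from the corresponding naturality of $\psi$ and $\phi$ in Lemma~\ref{mainlem}.

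The one step requiring genuine care is the compatibility $\phi\circ j=i\circ\phi_K$: this does not follow from the abstract statement of Lemma~\ref{mainlem} but must be read off from the explicit construction of the splitting $\phi$ via excision at $v$. Everything else is routine bookkeeping with simplicial chain complexes of pairs of subcomplexes.
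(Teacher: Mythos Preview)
Your argument is correct and follows the same route as the paper: use the naturality of $\psi$ and $\phi$ from Lemma~\ref{mainlem} for the inclusion $K\hookrightarrow L$ to obtain a commuting ladder, then pass to cokernels and check that the induced maps still satisfy $\bar\phi\circ\bar\psi=\mathrm{id}$. You are in fact slightly more careful than the paper on one point: the statement of Lemma~\ref{mainlem} asserts naturality only for $\psi$, and the paper's proof of the corollary simply invokes ``naturality of $\psi$ and $\phi$'', whereas you correctly observe that the compatibility $\phi\circ j=i\circ\phi_K$ must be read off from the explicit excision-at-$v$ description of $\phi$ rather than from the lemma as stated.
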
 

\begin{proof} 
$C_*(CK',K')$ is a subcomplex of $C_*(CL',L')$ and the 
corresponding quotient is $C_*(CL',L'\cup CK')$.  Similarly, 
$C_*(\Sigma(K)/N)$ is a subcomplex of $C_*(\Sigma/N)$ with 
$C_*(\Sigma/N,\Sigma(K)/N)$ the corresponding quotient.

By naturality of $\psi$ and $\phi$ we get a diagram as follows, 
in which the two left-hand squares with the same label on both 
vertical sides commute and such that the two composites labelled 
$\phi\circ\psi$ are equal to the relevant identity maps.  A
diagram chase shows that there are unique maps $\psi$~and~$\phi$ 
corresponding to the dotted vertical arrows that make the right-hand
squares with the same label on both 
vertical sides commute, and that these maps also satisfy $\phi\circ\psi=1$.  
\[\xymatrix{
0\ar[r]&C_*(CK,K')\ar@/^/[d]^\psi \ar[r]& C_*(CL',L')\ar@/^/[d]^\psi 
\ar[r]&C_*(CL',L'\cup CK')\ar@{-->}@/^/[d]^\psi\ar[r]& 0\\
0\ar[r]&C_*(\Sigma(K)/N)\ar@/^/[u]^\phi \ar[r]& C_*(\Sigma/N)\ar@/^/[u]^\phi 
\ar[r]&C_*(\Sigma/N,\Sigma(K)/N)\ar@{-->}@/^/[u]^\phi\ar[r]&0\\
}
\] 
\end{proof}

\section{Proof of Theorem~\ref{th:ksbq}} 

As in the statement of Theorem~\ref{th:ksbq}, let $L$ be a finite
$n$-dimensional flag complex equipped with an admissible simplicial
action of a finite group $Q$, let $(W,S)=(W_L,S_L)$ be the associated
right-angled Coxeter system, let $N$ be a finite-index subgroup of $W$
that is normalized by $Q$, and let $G$ be the semidirect product 
$G=N\semi Q$.  

\begin{proof}[Proof of Theorem~\ref{th:ksbq}]
Note that $G$ may be viewed as a finite-index 
subgroup of the semidirect product $W\semi Q$.  Under these
hypotheses, we already see from Corollary~\ref{equivariant} that 
the Davis complex $\Sigma$ is a cocompact $(n+1)$-dimensional 
model for $\ebar G$, and that $G$ is virtually torsion-free.  

Using the hypothesis that $L$ is acyclic, we see that the subcomplex
$\Sigma^{\sing(W)}$ of $\Sigma$ consisting of those points whose
stabilizer in $W$ is non-trivial is acyclic by Lemma~\ref{modW}.  In
this case any finite-index torsion-free subgroup of $G$ acts freely on
the acyclic $n$-dimensional complex $\Sigma^{\sing(W)}$, which 
implies that $\vcd G\leq n$.  

For the remainder of the proof it will be convenient to define 
$K:=L^\sing$, the subcomplex of $Q$-singular points in $L$.  
(We warn the reader that our use of `$K$' is different
to that in~\cite[ch.~7--8]{davis2}.)  

Next we show that $\cdbar G\geq n+1$.  Since $W^\ev$ has index~2
in $W$ and is clearly $Q$-invariant, we see that $(N\cap W^\ev)\semi
Q$ is a subgroup of $G$ of index at most~2.  Hence without loss of
generality we may assume that $N\leq W^\ev$.  Now consider the family
$\mW$ of finite subgroups of $G$, consisting of those finite subgroups
that are contained in $N$, or equivalently the finite subgroups that
map to the trivial subgroup under the factor map $G\rightarrow Q$.
The stabilizers in $W\semi Q$ of vertices of $\Sigma$ are described in
Remark~\ref{remstabs}, and by intersecting with $G=N\semi Q$ we get a
similar description of stabilizers in $G$: the stabilizer of the
vertex $wW_T$ is the direct product of the intersection $N\cap
wW_Tw^{-1}$ and a subgroup that maps isomorphically to $Q_T$, the
stabilizer in $Q$ of the vertex $T$ of $CL'$.  It follows that
$\Sigma^{\mW\mhyph\sing}$ is equal to the inverse image in $\Sigma$ of
the $Q$-singular set $CK'$ in $\Sigma/W=CL'$.  Hence
$\Sigma^{\mW\mhyph\sing}$ is the $W$-invariant subcomplex of the
cubical structure on $\Sigma$ that corresponds (under the map of
Proposition~\ref{natbij}) to $K=L^\sing$.  Using
Corollary~\ref{cormainlem} applied in this case, we see that
$H^{n+1}(\Sigma/N,\Sigma^{\mW\mhyph\sing}/N)$ admits a split
surjection onto $H^{n+1}(CL',L'\cup CK')$, which is isomorphic to
$H^n(L,K)=H^n(L,L^\sing)$ by excision.  Proposition \ref{bredon_prop}
finishes the argument.

To show that $\vcd G=n$ when $L$ is a barycentric subdivision, we use
the calculation of the cohomology of $W$ with free coefficients as
described in~\cite[section~8.5]{davis2}.  If $v$ is a vertex of $L$
that corresponds to the barycentre of an $n$-dimensional cell, then
$L-v$ is homotopy equivalent to the subcomplex obtained from $L$ by
removing the (interior of the) $n$-dimensional cell.  Hence we see that 
$H^{n-1}(L-v)\cong \Z$, and so by~\cite[cor.~8.5.3]{davis2}, $H^n(W;\Z
W)$ contains a free abelian summand.

Now we show that $\vcd G=n$ in the case when $K=L^\sing$ is a full
subcomplex of $L$.  From the long exact sequence for the pair $(L,K)$
we see that $H^{n-1}(K)\neq 0$, and hence $H^n(CK,K)\neq 0$.
Lemma~\ref{mainlem} applied to the Coxeter group $W_K$ and its
finite-index torsion-free subgroup $W'_K$ shows that $H^n(W'_K;\Z)=
H^n(\Sigma_K/W'_K)$ contains a summand isomorphic to $H^n(CK,K)$ and
so is not zero.  (Here we use $\Sigma_K$ to denote $\Sigma(W_K,S_K)$
since we reserve $\Sigma$ to stand for $\Sigma(W_L,S_L)$.)  Since $K$
is a full subcomplex of $L$, the Coxeter group $W_K$ is naturally a
subgroup of $W_L\leq G$, and hence $\vcd G\geq \vcd W_K \geq n$.

The general case of Theorem~\ref{th:ksbq} follows from the 
case described above, but since we will make extensive use
of the K\"unneth theorem it is helpful to work with cohomology
with coefficients in a finite field rather than integral cohomology.
Since $L$ is finite and $n$-dimensional, the hypothesis that
$H^n(L,L^\sing)\neq 0$ is equivalent to the existence of a prime~$p$
for which the mod-$p$ cohomology group $H^n(L,L^\sing;\F_p)\neq 0$.
Similarly, the hypothesis that
$\bigotimes_{i=1}^m H^{n_i}(L_i,L_i^\sing)\neq 0$
is equivalent to the existence of a single prime $p$ such that for
each $i$, $H^{n_i}(L_i,L_i^\sing;\F_p)\neq 0$.  For the remainder of
the proof, we fix such a prime.  The mod-$p$ analogues of
Proposition~\ref{bredon_prop} and Corollary~\ref{cormainlem} are
easily deduced from the integral versions.  

Now let each $G_i$ be defined as above 
in terms of $L_i$, $Q_i$, $n_i$ and $N_i\leq W_i$, and define 
$\Gamma:=G_1\times\cdots\times G_m$, $Q=Q_1\times\cdots\times Q_m$, 
and $W:=W_1\times \cdots\times W_m$.  Finally, let $n:=\sum_{i=1}^m n_i$.  
The direct product $\Sigma:=\Sigma_1\times \cdots\times \Sigma_m$ is a 
cocompact model for $\ebar\Gamma$ of dimension $m+n$, and so 
$\cdbar \Gamma\leq m+n$.  Also 
the direct product $\Sigma_1^\sing\times\cdots\times \Sigma_m^\sing$ 
is an acyclic $n$-dimensional simplicial complex admitting a proper
$\Gamma$-action, which implies that $\vcd\Gamma\leq n$.  

The lower bounds also work just as in the case $m=1$; first we 
consider $\cdbar\Gamma$.  If we define
$\mW$ to be the family of finite subgroups of $\Gamma$ that are contained 
in $W$, then a point 
$x=(x_1,\ldots,x_m)\in \Sigma= \Sigma_1\times\cdots\times \Sigma_m$ 
is in $\Sigma^{\mW\mhyph\sing}$ if and only if there is an $i$ so 
that $x_i\in \Sigma_i^{\mW_i\mhyph\sing}$.  Hence we see that 
if we define $N:=N_1\times\cdots\times N_m$, then 
$$\Sigma^{\mW\mhyph\sing}=\bigcup_{i=1}^m\Sigma_1\times\cdots
\times\Sigma^{\mW_i\mhyph\sing}\times\cdots\times \Sigma_m,$$ 
and by the relative K\"unneth Formula
$H^{n+m}(\Sigma/N,\Sigma^{\mW\mhyph\sing}/N;\F_p)$ contains a direct
summand isomorphic to  
$\bigotimes_{i=1}^m
H^{n_i+1}(\Sigma_i/N_i,\Sigma_i^{\mW_i\mhyph\sing}/N_i;\F_p),$
which is non-zero since it contains a 
summand isomorphic to 
\[\bigotimes_{i=1}^m H^{n_i+1}(CL_i,L_i\cup CK_i;\F_p) \cong 
\bigotimes_{i=1}^m H^{n_i}(L_i,K_i;\F_p)= 
\bigotimes_{i=1}^m H^{n_i}(L_i,L_i^\sing;\F_p).\] 

To give a lower bound for $\vcd\Gamma$, start by considering the two
extra hypotheses separately for each $i$.  If $L_i$ is a barycentric
subdivision, then as above $H^{n_i}(W_i;\Z W_i)$ contains a free
abelian summand, and so by the universal coefficient theorem
$H^{n_i}(W_i;\F_p W_i)\neq 0$.  If instead $K_i:=L_i^\sing$ is a full
subcomplex of $L_i$, then $H^{n_i}(W_i';\F_p)\neq 0$ as above.  There
is a surjective homomorphism from $\F_pW'_i$ onto $\F_p$ and hence a
short exact sequence of $\F_pW'_i$ modules
\[0\rightarrow I\rightarrow
\F_pW'_i\rightarrow \F_p \rightarrow 0\] 
for suitable $I$.  The
corresponding long exact sequence in cohomology implies that
$H^{n_i}(W'_i;\F_pW'_i)\rightarrow H^{n_i}(W'_i;\F_p)$ is surjective,
since its cokernel is contained in $H^{n_i+1}(W'_i;I)=0$.  It follows
that $H^{n_i}(W_i;\F_p W_i)\cong H^{n_i}(W'_i;\F_pW'_i)\neq 0$.  Since
$W$ acts cocompactly on $\Sigma=\Sigma_1\times\cdots\times \Sigma_m$,
the universal coefficient theorem for cohomology with compact supports
may be applied~\cite[8.5.9]{davis2}.  Hence
\[H^n(W;\F_pW) \cong \bigotimes_{i=1}^m H^{n_i}(W_i;\F_pW_i) \neq 0,\] 
showing that $\vcd\Gamma\geq n$ as required.  
\end{proof}

\section{Examples}

In this section we construct sufficiently many examples of finite
groups $Q$ and $Q$-CW-complexes $L$ to establish
Corollary~\ref{cora}.  First we collect some results concerning
triangulations.  

\begin{proposition} Any finite $Q$-CW-complex is equivariantly
  homotopy equivalent to a finite simplicial complex of the same dimension
  with an admissible $Q$-action.  If $L$ is any simplicial complex
  with $Q$-action, the $Q$-action on the barycentric subdivision $L'$ of $L$ is
  admissible.  For any admissible action of $Q$ on
  $L$, $L^\sing$ is a subcomplex.  If $M\leq L$ is any subcomplex of a
  simplicial complex $L$, then its barycentric subdivion $M'$ is a
  full subcomplex of the flag complex $L'$.  
\end{proposition}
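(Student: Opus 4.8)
The plan is to treat the four assertions separately, handling the three purely combinatorial ones first and then deducing the equivariant triangulation statement. So I would prove, in order: (a) the barycentric subdivision of any simplicial $Q$-complex carries an admissible action; (b) $L^\sing$ is a subcomplex whenever $Q$ acts admissibly; (c) $M'$ is a full subcomplex of the flag complex $L'$ whenever $M\leq L$; and finally (d) any finite $Q$-CW-complex is $Q$-homotopy equivalent to a finite simplicial $Q$-complex of the same dimension with an admissible action. Statements (a)--(c) are elementary facts about the combinatorics of simplicial complexes; statement (d) is the substantive one, and I expect the reduction of a finite $Q$-CW-complex to a simplicial model to be the main obstacle, requiring an induction over equivariant cells and an appeal to simplicial approximation.

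For (a): a simplex of $L'$ is a chain $\sigma_0<\sigma_1<\cdots<\sigma_k$ of simplices of $L$, with vertex set the barycentres $\{\hat\sigma_0,\dots,\hat\sigma_k\}$; an automorphism $q$ carries $\hat\sigma_i$ to $\widehat{q\sigma_i}$ and preserves each $\dim\sigma_i$, and since $\dim\sigma_0<\cdots<\dim\sigma_k$ are distinct, any $q$ stabilizing the chain setwise fixes every $\hat\sigma_i$, hence fixes the simplex pointwise. For (b): if $gx=x$ with $g\neq 1$, let $\sigma$ be the carrier of $x$; as $g$ maps $\mathrm{int}(\sigma)$ onto $\mathrm{int}(g\sigma)$ we get $g\sigma=\sigma$, admissibility makes $g$ fix $\sigma$ and hence all its faces pointwise, so the closed simplex $\bar\sigma$ lies in $L^\sing$; thus $L^\sing$ is a union of closed simplices. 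For (c): the vertices of $L'$ are the simplices of $L$, and a finite set of them spans a simplex of $L'$ exactly when it is a chain under inclusion, a condition detected pairwise, so $L'$ is flag; the vertices of $M'$ are precisely the simplices of $L$ contained in $M$, and any chain of such is a chain in $M$, hence a simplex of $M'$, so $M'$ is full in $L'$.

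For (d) I would first reduce, using (a), to producing a finite simplicial complex of dimension $\dim X$ carrying \emph{some} simplicial $Q$-action and $Q$-homotopy equivalent to $X$; a single barycentric subdivision then makes that action admissible without altering the dimension or the $Q$-homotopy type. Such a model I would build by induction on the number of equivariant cells of $X$. Writing $X=Y\cup_\phi(Q\times_H D^n)$ with $Y$ having fewer cells and $\phi$ the $Q$-extension of an attaching map $\phi_0\colon S^{n-1}\to Y^H$, the inductive hypothesis supplies a finite simplicial $Q$-complex $K$ with $\dim K\leq\dim Y$ and a $Q$-homotopy equivalence $Y\simeq_Q K$; subdividing $K$ and using (a), we may take the $Q$-action on $K$ to be admissible, so that $K^H$ is a subcomplex of $K$. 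The $Q$-homotopy equivalence restricts to a homotopy equivalence $Y^H\simeq K^H$; triangulating $S^{n-1}$ as an $(n-1)$-dimensional complex $\Sigma$ and subdividing $\Sigma$ far enough, simplicial approximation replaces the composite $\Sigma\to Y^H\to K^H$ by a simplicial map $\psi_0\colon\Sigma\to K^H$. Forming the simplicial mapping cylinder of $\psi_0$, attaching the simplicial cone $C\Sigma$ along $\Sigma$, and carrying out this construction simultaneously over the orbit $Q/H$ (on which $Q$ acts by permutations, extending the $Q$-action on $K$) yields a finite simplicial $Q$-complex $X'$ with $X'\simeq_Q Y\cup_\phi(Q\times_H D^n)=X$ and, on tracking dimensions through the induction, $\dim X'=\dim X$. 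This completes the induction, and a final barycentric subdivision delivers the admissible model required in (d).

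The one genuinely delicate point is the simplicial-approximation step of (d): it relies on the standard facts that a $Q$-homotopy equivalence restricts to an ordinary homotopy equivalence on each fixed-point set, that $K^H$ is a subcomplex (this is where the admissibility arranged via (a) is used), and that simplicial approximation of a map into $|K^H|$ may be taken to land in $K^H$ and not to increase dimension; granted these, the rest is bookkeeping with mapping cylinders and cones. Alternatively, (d) may simply be quoted from the literature on equivariant triangulations of $Q$-CW-complexes, in which case the proposition reduces to (a)--(c).
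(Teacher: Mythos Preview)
Your proposal is correct and follows essentially the same approach as the paper. For (a)--(c) your arguments match the paper's almost verbatim (chains in the face poset have distinct dimensions, admissibility makes each $L^H$ a full subcomplex so $L^\sing$ is a subcomplex, subposets give full subcomplexes of $L'$); for (d) the paper simply asserts that it ``follows easily from the simplicial approximation theorem'' without further detail, and your cell-by-cell induction via simplicial mapping cylinders is a reasonable way to unpack that assertion.
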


\begin{proof}
  The first claim follows easily from the simplicial approximation
  theorem.  Simplices of $L'$ correspond to chains in the poset of
  simplices of $L$; since $Q$ acts as automorphisms of this poset the
  action on $L'$ is admissible.  For an admissible action of $Q$ on $L$, a
  simplex of $L$ is fixed by $H\leq Q$ if and only if each of its
  vertices is fixed.  Hence each $L^H$ is the full subcomplex on the
  $H$-fixed vertices, and $L^\sing=\bigcup_{1<H\leq Q}L^H$ is a
  subcomplex.  Finally if $M$ is any subcomplex of $L$, the poset of
  simplices of $M$ is a subposet of the poset of simplices of $L$, and
  so $M'$ is a full subcomplex of $L'$.
\end{proof}

\noindent 
{\bf Example 1.} Let $Q$ be the alternating group $A_5$, and define a
$Q$-CW-complex as follows.  For the 1-skeleton $L^1$ of $L$ take the
complete graph on five vertices, with the natural action of $Q=A_5$.
In $A_5$, the 24 elements of order five split into two conjugacy
classes of size 12, and any element $g$ of order 5 is conjugate to
$g^{-1}$ (but is not conjugate to $g^2$ or $g^3$).  Define $L$ by
using one of the two conjugacy classes of 5-cycles to describe
attaching maps for six pentagonal 2-cells.  By construction there is a
$Q$-action without a global fixed point and it is easily checked that
$L$ is acyclic.  In fact, $\pi_1(L)$ is isomorphic to $SL(2,5)$, the
unique perfect group of order 120, and $L$ is isomorphic to the
2-skeleton of the Poincar\'e homology sphere~\cite[I.8]{Bredon}.  The
singular set for the $Q$-action consists of the 1-skeleton and the
five lines of symmetry of each pentagonal 2-cell.  Equivalently the
singular set is the 1-skeleton of the barycentric subdivision of $L$
(i.e., the simplicial complex with 21 vertices coming from the poset
of faces of $L$).  In particular $H^2(L,L^\sing)\neq 0$.  For this
21-vertex triangulation, $L^\sing$ is not a full subcomplex of $L$.
This could be rectified by taking a finer triangulation, but instead
note that $L$ is the barycentric subdivision of a polygonal complex.
By taking each $Q_i$ to be $A_5$ and each $L_i$ to be this 21-vertex
triangulation of $L$, we obtain groups $\Gamma_m$ having the
properties stated in the first part of Corollary~\ref{cora}.

\medskip\noindent
{\bf Example 2.}  Fix distinct primes $p$ and $q$, and let 
$Q$ be cyclic of order $q$, generated by $g$.  For the 
$Q$-fixed point set $L^Q$, take a mod-$p$ Moore space 
$M(1,p)$.  This space has a CW-structure with 1-skeleton
a circle and one 2-cell $f$.  The 2-cell $f$ is attached 
to the circle via a map of degree~$p$.  Now define $L^2$ 
by adding on a free $Q$-orbit of 2-cells $f_0,\ldots,f_{q-1}$, 
where $f_i=g^if_0$, so that each $f_i$ is attached to the 
circle by a degree one map.  $L^2$ is simply connected, 
and $H_2(L^2)$ is a free $\Z Q$-module of rank one, 
since it has a $\Z$-basis given by the elements 
$$e_j:=f - \sum_{i=0}^p g^i f_j = f-\sum_{i=0}^p g^{i+j}f_0$$
for $0\leq j<q$, and $g^je_0= e_j$ for each $j$.  Make 
$L$ by attaching a free $Q$-orbit of 3-cells to kill each 
$e_j$, so that $L$ is acyclic (and also contractible).
The long exact sequence for the pair $(L,L^Q)=(L,L^\sing)$
implies that $H^3(L,L^Q)\cong \Z/p$.  
To establish the second part of Corollary~\ref{cora}
we take each $Q_i$ to be cyclic of order $q_i$, take 
each $L_i$ to be a suitable triangulation of the above 
$Q_i$-CW-complex for some fixed choice of $p$, 
and take $N_i$ to be the commutator subgroup of the 
Coxeter group $W_i:=W_{L_i}$.  For any such choice, 
we obtain a group $\Lambda_m$ as in the statement.  
To ensure that $\Lambda_m$ contains only cyclic finite 
subgroups we must take the primes $q_i$ all distinct, 
whereas to ensure that $\Lambda_m$ contains only abelian 
finite subgroups of exponent $q$ we take $q_i=q$ for all~$i$.  

\section{Contractibility and acyclicity} 

In~\cite{BLN}, it was shown that certain right-angled Coxeter groups 
$W$ have the property that $\vcd W=\cdbar W =2<\gdbar W=3$.  In this section
we improve this result by showing that for these same groups there 
is no 2-dimensional contractible proper $W$-CW-complex.  

We will use a few subsidiary results in the proof.  Results similar to
Propositions \ref{propsubcx}~and~\ref{propacyc} appear
in~\cite{casadicks}, and with extra hypotheses in~\cite{segev}.
Proposition~\ref{kervair} is a corollary of the celebrated 
Gerstenhaber-Rothaus theorem~\cite{GR}.  

\begin{proposition} If $Y$ is a subcomplex of a 2-dimensional 
acyclic complex, then $H_2(Y)=0$ and each $H_i(Y)$ is 
free abelian. \label{propsubcx}
\end{proposition}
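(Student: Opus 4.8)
The plan is to fix a $2$-dimensional acyclic complex $X$ containing $Y$ as a subcomplex and to work throughout with cellular (equivalently, simplicial) chain groups, exploiting the fact that in each degree $C_*(Y)$ embeds in $C_*(X)$ as a direct summand spanned by a subset of the cells. First I would dispose of $H_2(Y)$: since $X$ is $2$-dimensional, $C_3(X)=0$ and hence $H_2(X)=Z_2(X)=\ker\partial_2^X$; acyclicity of $X$ makes this kernel trivial, so $\partial_2^X$ is injective, and so is its restriction $\partial_2^Y$ to $C_2(Y)$. Therefore $H_2(Y)=Z_2(Y)=0$. For dimension reasons $H_i(Y)=0$ when $i\ge 3$, and $H_0(Y)$ is the free abelian group on the set of connected components of $Y$, so the only remaining point is the freeness of $H_1(Y)$.

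For $H_1(Y)$ I would invoke the long exact homology sequence of the pair $(X,Y)$. Since $X$ is acyclic we have $H_2(X)=H_1(X)=0$, and the relevant segment collapses to an isomorphism $H_1(Y)\cong H_2(X,Y)$. The relative chain complex $C_*(X,Y)=C_*(X)/C_*(Y)$ is again concentrated in degrees $\le 2$, so $H_2(X,Y)$ equals the group of relative $2$-cycles $Z_2(X,Y)$, which is a subgroup of $C_2(X,Y)$. Because the $2$-cells of $Y$ form a subset of the $2$-cells of $X$, the quotient $C_2(X,Y)=C_2(X)/C_2(Y)$ is free abelian, with basis the $2$-cells of $X$ not lying in $Y$. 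A subgroup of a free abelian group is free abelian, so $H_2(X,Y)$, and hence $H_1(Y)$, is free abelian.

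The one step requiring an idea is the observation that realizing $H_1(Y)$ as $H_2(X,Y)$ converts the a priori arbitrary quotient $Z_1(Y)/B_1(Y)$ into a \emph{subgroup} of a free abelian group, where freeness is automatic; everything else is bookkeeping. It is worth noting that acyclicity of $X$ is genuinely needed here --- for $X=Y=\mathbb{RP}^2$ one has a $2$-dimensional (non-acyclic) complex with $H_1(Y)=\Z/2$ --- and it enters precisely through the vanishing of $H_1(X)$ and $H_2(X)$ that trivializes the long exact sequence.
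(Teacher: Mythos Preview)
Your argument is correct and is essentially the same as the paper's: both use the long exact sequence of the pair $(X,Y)$, the vanishing of $H_*(X)$, and the fact that the top relative homology $H_2(X,Y)$ sits as a subgroup of the free abelian group $C_2(X,Y)$. The only cosmetic differences are that the paper phrases the argument for an $n$-dimensional acyclic complex and then specializes to $n=2$, and that you handle $H_2(Y)=0$ by a direct chain-level injectivity observation rather than reading it off the exact sequence---but these are the same computation.
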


\begin{proof} 
If $Y$ is any subcomplex of an $n$-dimensional acyclic 
complex $Z$, then consideration of the homology long exact 
sequence for the pair $(Z,Y)$ shows that $H_n(Y)$ is 
trivial and that $H_{n-1}(Y)$ is free abelian.  Since 
$H_0$ is always free abelian, the case $n=2$ gives the 
claimed result.  
\end{proof}

\begin{proposition}\label{propacyc}
Let $Q$ be a finite soluble group and let $X$ be a 2-dimensional 
acyclic $Q$-CW-complex.  Then the fixed point set $X^Q$ is also acyclic. 
\end{proposition}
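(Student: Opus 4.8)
The plan is to argue by induction on $|Q|$, reducing to the case where $Q$ is cyclic of prime order and then combining classical Smith theory with Proposition~\ref{propsubcx}.

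I first treat the base case $Q\cong\Z/p$. Since $X$ is acyclic over $\Z$ it is in particular $\F_p$-acyclic, and $X$ is finite-dimensional, so classical Smith theory (due to P.\,A.\ Smith) shows that the fixed-point set $X^Q$ is again $\F_p$-acyclic; in particular $X^Q$ is non-empty and connected and $H_1(X^Q;\F_p)=0$. On the other hand $X^Q$ is a subcomplex of the $2$-dimensional acyclic complex $X$, so by Proposition~\ref{propsubcx} we have $H_2(X^Q;\Z)=0$ and every $H_i(X^Q;\Z)$ is free abelian. A routine universal-coefficients computation then upgrades the mod-$p$ statements to integral ones: $H_0(X^Q;\Z)=\Z$; the group $H_1(X^Q;\Z)$ is free abelian with $H_1(X^Q;\Z)\otimes\F_p=H_1(X^Q;\F_p)=0$, hence zero; and $H_i(X^Q;\Z)=0$ for $i\geq 2$ because $X^Q$ has dimension at most $2$ and $H_2(X^Q;\Z)=0$. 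Therefore $X^Q$ is acyclic.

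For the inductive step, suppose $|Q|>1$ and that the proposition holds for all soluble groups of smaller order. Because $Q$ is soluble and non-trivial, its abelianization $Q/[Q,Q]$ is a non-trivial finite abelian group, so there is a surjection $Q\twoheadrightarrow\Z/p$ for some prime $p$; let $N$ be its kernel. If $N=1$ then $Q\cong\Z/p$ and we are done by the base case, so assume $N\neq 1$. Then $N$ is soluble with $|N|<|Q|$, and restricting the action exhibits $X$ as a $2$-dimensional acyclic $N$-CW-complex, so the inductive hypothesis gives that $X^N$ is acyclic. Moreover $X^N$ is a subcomplex of $X$, hence is itself a $2$-dimensional acyclic CW-complex, and it inherits a cellular action of $Q/N\cong\Z/p$ with $(X^N)^{Q/N}=X^Q$. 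Applying the base case to this $\Z/p$-action on $X^N$ shows that $X^Q$ is acyclic, completing the induction.

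The only substantive point is the base case, and there all the work is done by Smith theory; the subtlety worth flagging is that Smith theory by itself yields only \emph{mod-$p$} acyclicity of $X^Q$, and the hypothesis that $X$ is $2$-dimensional is used in an essential way — through Proposition~\ref{propsubcx} — to promote this to integral acyclicity (for higher-dimensional $X$ the analogous statement can fail). The remaining steps are routine bookkeeping: verifying that passage to $X^N$ preserves all the hypotheses in play, namely being a $2$-dimensional acyclic CW-complex carrying the residual action of $Q/N$, which is immediate since $X^N$ is a subcomplex of $X$ on which $N$ acts trivially.
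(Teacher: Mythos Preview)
Your proof is correct and follows essentially the same approach as the paper: reduce to the prime-order case via a normal subgroup of prime index (the paper leaves the induction implicit, you spell it out), then invoke Smith theory for mod-$p$ acyclicity and combine with Proposition~\ref{propsubcx} and universal coefficients to obtain integral acyclicity. The only differences are expository.
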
 

\begin{proof} 
The finite soluble group $Q$ has a normal subgroup $N$ of prime index, 
the factor group $Q/N$ acts on the $N$-fixed point set $X^N$, and
the equality $X^Q=(X^N)^{Q/N}$ holds.  Hence it suffices to consider 
the case in which $Q$ has prime order.  

By the P.~A.~Smith theorem, $X^Q$ is mod-$p$ acyclic in the case 
when $Q$ has order $p$.  By the previous proposition, $H_i(X^Q)$ 
is free abelian for all $i$.  By 
the universal coefficient theorem, the rank of the $i$th mod-$p$ 
homology group of $X^Q$ is equal to the rank of $H_i(X^Q)$.  
Hence $X^Q$ must be acyclic.  
\end{proof}   

\begin{proposition}\label{kervair} Let $\Gamma$ be a group and
  $\rho:\Gamma\to U(n)$ be a unitary representation of $\Gamma$.  
  Define $\widetilde \G:=\Gamma \ast \langle x_1, \dots,
  x_r\rangle/{\langle\langle w_1, \dots, w_r\rangle\rangle}$ where
  each $w_i$ is a word in elements of $\G$ and $x_1, \dots, x_r$. Let
  $d_{ij}$ be the total exponent of $x_j$ in $w_i$ and set
  $d=\det(d_{ij})$. If $d\ne 0$, then $\rho$ extends to a representation
  $\tilde \rho: \widetilde \Gamma\to U(n)$.   
\end{proposition}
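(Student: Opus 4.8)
The plan is to reduce the extension problem to a statement about the existence of a unitary matrix solution of a system of matrix equations, and then to apply the Gerstenhaber--Rothaus theorem. Recall that $\widetilde\Gamma$ is generated by $\Gamma$ together with new generators $x_1,\dots,x_r$, subject only to the relations $w_i = 1$. To extend $\rho$ to $\tilde\rho$ it suffices to choose unitary matrices $X_1,\dots,X_r\in U(n)$ such that, for each $i$, the word $w_i$ evaluates to the identity matrix when each generator $x_j$ is replaced by $X_j$ and each $\gamma\in\Gamma$ by $\rho(\gamma)$. Indeed, given such a choice, the assignment $x_j\mapsto X_j$ together with $\rho$ on $\Gamma$ respects all defining relations of the free product $\Gamma\ast\langle x_1,\dots,x_r\rangle$ and kills each $w_i$, hence descends to a homomorphism $\tilde\rho\colon\widetilde\Gamma\to U(n)$ restricting to $\rho$ on $\Gamma$.

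So the task is to solve the system of equations $w_i(\rho(\cdot),X_1,\dots,X_r) = I$ for $(X_1,\dots,X_r)\in U(n)^r$. This is exactly the situation handled by the Gerstenhaber--Rothaus theorem~\cite{GR}: given a system of $r$ ``equations in $r$ unknowns'' over a compact connected Lie group, with coefficients in that group, the system has a solution provided the matrix of exponent sums $(d_{ij})$ (recording, for each equation $w_i$, the total exponent of the unknown $x_j$) has nonzero determinant. Here the relevant compact connected Lie group is $U(n)$, the coefficients are the matrices $\rho(\gamma)$ for $\gamma$ appearing in the $w_i$, the unknowns are $X_1,\dots,X_r$, and by hypothesis $d = \det(d_{ij})\neq 0$. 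Applying the theorem produces the required matrices $X_1,\dots,X_r$, and hence the extension $\tilde\rho$.

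The one point requiring a little care is the passage from ``abstract group equation over $U(n)$'' to the precise hypotheses of Gerstenhaber--Rothaus. Their theorem is stated for the group $U(n)$ (or more generally a compact connected Lie group) viewed as a real algebraic group, and asserts that the evaluation map $U(n)^r\to U(n)^r$ sending $(X_1,\dots,X_r)$ to $(w_1,\dots,w_r)$ has nonzero degree when $\det(d_{ij})\neq 0$, using that $U(n)$ is connected so that each coefficient $\rho(\gamma)$ is homotopic to $I$ and may be deformed away; the degree of the resulting map equals $\pm\det(d_{ij})^{\dim U(n)}$ up to a nonzero factor, which is nonzero, so the map is surjective and in particular $(I,\dots,I)$ is attained. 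I expect this bookkeeping — checking that the $w_i$ genuinely form a system of $r$ equations in $r$ unknowns with the stated exponent-sum matrix, and that coefficients from $\Gamma$ do not obstruct the degree argument because $U(n)$ is connected — to be the only real obstacle; once it is in place the conclusion is immediate. This gives precisely the statement of Proposition~\ref{kervair}.
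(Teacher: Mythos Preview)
Your proposal is correct and follows exactly the same approach as the paper: reduce the extension problem to solving the system of word equations $w_i=1$ in $U(n)$, then invoke theorem~1 of Gerstenhaber--Rothaus~\cite{GR}. The paper's proof is in fact terser than yours, simply citing~\cite{GR} for the existence of a solution without the additional remarks on degree and connectedness.
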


\begin{proof} Extending $\rho$ to a representation of $\widetilde
  \Gamma$ is equivalent to finding solutions $\overline{x}_i\in U(n)$
  to the system of equations $\overline{w}_1=\cdots=\overline{w}_r=1$.   
  Here $\overline{w}_i$ is the word in elements of $U(n)$ and
  variables $\overline{x}_1,\ldots,\overline{x}_r$ corresponding to
  the word $w_i$.  In more detail, the elements
  of $U(n)$ appearing in $\overline{w}_i$ are obtained by applying
  $\rho$ to the elements of $\Gamma$ appearing in the word $w_i$,
  while each occurrence of $x_i$ is replaced by $\overline{x}_i$.  
  When such a solution has been found, we may define
  $\widetilde{\rho}(x_i):= \overline{x}_i$.  The existence of a
  solution to this system is established in~\cite[theorem~1]{GR}.
  \end{proof} 

We recall that the {\sl nerve} of a covering is the simplicial complex whose
vertices are the sets in the cover, and whose simplices are the finite
collections with a non-empty intersection~\cite[section~3.3]{hatcher}.  

\begin{lemma} \label{lemmahelly}
Let $X$ be a CW-complex, let $S$ be a finite indexing 
set, and let $X(s)$ be a subcomplex of $X$ such that each 
$X(s)$ is acyclic and each intersection of $X(s)$'s is either 
empty or acyclic.  Define 
$$X^\#:= \bigcup_{s\in S} X(s),$$ 
and let $|\mathcal{N}|$ be the realization of the nerve of the 
covering of $X^\#$ by the subcomplexes $X(s)$.  There is a map 
$f:X^\#\rightarrow |\mathcal{N}|$ which is a homology isomorphism 
and induces a surjection of fundamental groups.  
\end{lemma}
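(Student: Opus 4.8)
The plan is to build the map $f$ and verify its properties using a standard nerve-type argument, working one cell at a time. First I would use the acyclicity hypotheses to produce, for each nonempty intersection $X(s_0)\cap\cdots\cap X(s_k)$, a contraction that can be assembled into a map. Concretely, I would construct $f$ by induction over the skeleta of $X^\#$: for a vertex $v$, let $\sigma(v)$ be the simplex of $\mathcal N$ spanned by those $s$ with $v\in X(s)$, and map $v$ to any point of $|\sigma(v)|$, say its barycentre; for a cell $e$ of $X^\#$, observe that $e$ lies in the acyclic intersection $X^\#(e):=\bigcap\{X(s):e\subseteq X(s)\}$, which maps into the contractible simplex $|\sigma(e)|\subseteq|\mathcal N|$, so the map already defined on $\partial e$ (which lands in $|\sigma(e)|$ by the inductive naturality of the construction) can be extended over $e$. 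This gives a cellular map $f:X^\#\to|\mathcal N|$ with the key property that $f^{-1}(\mathrm{star of the vertex }s)$ deformation retracts onto — or at least is homologically controlled by — $X(s)$.

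Next I would check the homology statement. The natural tool is the Mayer--Vietoris spectral sequence (equivalently, the Leray spectral sequence of the cover, or a bar-type double complex) associated to the cover $\{X(s)\}$ of $X^\#$. Since every nonempty finite intersection of the $X(s)$ is acyclic, the $E_1$-page of this spectral sequence for $X^\#$ has $E_1^{p,q}=\bigoplus C_p(\mathcal N;\Z)$-type terms concentrated in $q=0$, and it collapses to give $H_*(X^\#)\cong H_*(|\mathcal N|)$. One then verifies that the edge homomorphism of this spectral sequence is exactly $f_*$, by comparing the cover of $X^\#$ with the cover of $|\mathcal N|$ by open stars of vertices and using that $f$ is cover-preserving. (Alternatively, and perhaps more cleanly, one refines $f$ so that the preimage of each open star of a vertex $s$ is $X(s)$ up to homotopy, then applies the generalized nerve lemma.) This yields that $f$ is a homology isomorphism.

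Finally, for fundamental groups I would run the analogous van Kampen argument. Pick a basepoint and spanning tree; any loop in $|\mathcal N|$ is homotopic to an edge-loop, and each edge of $\mathcal N$, corresponding to a pair $(s,s')$ with $X(s)\cap X(s')\neq\emptyset$, can be lifted to a path in $X^\#$ passing through that nonempty intersection. Concatenating such lifts shows $f_*:\pi_1(X^\#)\to\pi_1(|\mathcal N|)$ is surjective. No injectivity is claimed, which is fortunate, since that would require the double intersections to be simply connected, not merely acyclic.

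The main obstacle is the bookkeeping needed to make the inductive construction of $f$ genuinely natural with respect to the poset of intersections, so that the Mayer--Vietoris / nerve spectral sequence really does identify its edge map with $f_*$; this is the standard but slightly fiddly heart of the nerve lemma, and the only place where the hypothesis that \emph{all} intersections (not just pairwise) are empty-or-acyclic is used in full.
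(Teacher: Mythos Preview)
Your proposal is correct and follows a genuinely different route from the paper's proof.  The paper does \emph{not} build $f$ directly; instead it reduces to the classical nerve lemma for covers with \emph{contractible} intersections.  Concretely, for each nonempty intersection $X(T)=\bigcap_{s\in T}X(s)$ the paper performs a Quillen plus construction (attaching 2- and 3-cells) to obtain simply-connected acyclic, hence contractible, replacements $Y(T)$, yielding a larger complex $Y^\#=\bigcup Y(s)$ with the same nerve.  Then $Y^\#\to|\mathcal N|$ is a homotopy equivalence by the standard nerve lemma, while the inclusion $X^\#\hookrightarrow Y^\#$ is a homology isomorphism (Mayer--Vietoris spectral sequence) and a $\pi_1$-surjection (equal 1-skeleta).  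The composite is the desired $f$.

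Your approach---building $f$ skeleton by skeleton, then invoking the Mayer--Vietoris spectral sequence for homology and a direct edge-loop lifting for $\pi_1$-surjectivity---is the more hands-on alternative, and the paper in fact remarks after its proof that ``it is also possible to prove the above result directly using the Mayer--Vietoris spectral sequence and the van Kampen theorem.''  The trade-off is exactly as you identify: the paper's plus-construction trick cleanly sidesteps the bookkeeping of matching the spectral-sequence edge map with $f_*$, at the cost of invoking the plus construction; your route avoids that machinery but must carry out that identification (which is standard, as you say).  Either way, the only delicate point is ensuring the isomorphism $H_*(X^\#)\cong H_*(|\mathcal N|)$ is realised by $f_*$ rather than merely existing abstractly, and both arguments handle this.
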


\begin{proof} 
In the case when each intersection of $X(s)$'s is either contractible
or empty, it is well-known that there is a homotopy equivalence 
$f:X^\#\rightarrow |\mathcal{N}|$~\cite[4.G, Ex.~4]{hatcher}.  
We use Quillen's plus construction to reduce to this case.  

For $T\subseteq S$, 
define $X(T)$ to be the intersection $X(T)=\bigcap_{s\in T} X(s)$.  
Suppose that $U\subseteq S$ is such that 
$X(U)$ is non-empty.  In this case, since $X(U)$ is 
acyclic we can find a set $A_U$ of 2-cells with attaching 
maps from the boundary of the 2-cell to $X(U)$ so that 
each attaching map represents a conjugacy class of 
commutators in $\pi_1(X(U))$ and so that the fundamental 
group of the resulting complex $\widehat{X}(U)$ is trivial.  Moreover, there
is a set $B_U$ of 3-cells and attaching maps from the boundary 
of the 3-cell to $\widehat{X}(U)$ so that the resulting complex 
$X_U$ contains $X(U)$ as a subcomplex, is simply-connected, 
and such that the inclusion of $X(U)$ into $X_U$ is a homology 
isomorphism.  Define $Y$ by attaching to $X$ 2- and 3-cells 
indexed by $\coprod_{U}A_U$ and $\coprod_U B_U$ respectively. 
Define a subcomplex $Y(s)$ of $Y$ by attaching to $X(s)$ the 2- and
3-cells indexed by $\coprod_{s\in U} A_U$ and $\coprod_{s\in U} B_U$ 
respectively.  Finally define $Y(T):=\bigcap_{s\in T}Y(s)$, 
and $Y^\#:= \bigcup_{s\in S}Y(s)$.  The nerve of the covering 
of $Y^\#$ by the subcomplexes $Y(s)$ is naturally isomorphic to 
$\mathcal{N}$.  A Mayer-Vietoris spectral sequence argument 
shows that the inclusion $X^\#\rightarrow Y^\#$ is a homology 
isomorphism, and this map induces a surjection $\pi_1(X^\#)
\rightarrow \pi_1(Y^\#)$ because the 1-skeleta of $X^\#$ and 
$Y^\#$ are equal.  
\end{proof} 

It is also possible to prove the above result directly using the
Mayer-Vietoris spectral sequence and the van Kampen theorem to keep
track of the homology and fundamental group respectively.

\begin{proof}[Proof of Theorem~\ref{acycthm}]
The Davis complex $\Sigma=\Sigma(W_L,S_L)$ is a cocompact 
3-dimensional model for $\ebar W_L$.  Since $L$ is acyclic, $\Sigma^\sing$ 
is a 2-dimensional acyclic proper $W_L$-CW-complex in which the fixed 
point set for any non-trivial finite subgroup is contractible.  This 
suffices to show that $\cdbar W_L=2$.  

Now suppose that $X$ is any contractible proper 2-dimensional 
$W_L$-CW-complex.  Let $S=S_L$, and define $X^\#$ as the union 
of the fixed point sets $X^s$: $X^\#:=\bigcup_{s\in S}X^s$.  
By construction, the realization of the nerve of the covering of $X^\#$ by the 
sets $X^s$ is equal to $L$.  By Propostion~\ref{propacyc}, 
for each $T\subseteq S$ that spans a simplex of $L$ the subset 
$$X(T):=\bigcap_{s\in T}X^s= X^{\langle T\rangle}$$ 
is acyclic, and for each $T$ that does not span a simplex of 
$L$, $X(T)$ is empty.  By Lemma~\ref{lemmahelly}, it follows 
that $X^\#$ is acyclic and that there is a natural surjection 
$\phi:\pi_1(X^\#)\rightarrow \pi_1(L)$.  
Define $\rho':=\rho\circ \phi:\pi_1(X^\#)\rightarrow U(n)$, a
non-trivial unitary representation of $\pi_1(X^\#)$.  
We use this representation to obtain a contradiction.  

Pick $g\in \pi_1(X^\#)$ so that $\rho'(g)\neq 1$.  
Since $X$ is contractible, there exists a connected subcomplex
$X_1$ of $X$ with $X^\# \subseteq X_1$ such that $X_1-X^\#$ comprises 
only finitely many cells, and such that $g$ maps to the identity
element of $\pi_1(X_1)$.  By Proposition~\ref{propsubcx}, 
$H_2(X_1)=0$, and $H_1(X_1)$ is free abelian.  In general $X_1-X^\#$ will
contain some 0-cells; by contracting some of the 1-cells in $X_1-X^\#$
we may get rid of these extra 0-cells without changing the homotopy
type.  In this way we replace $X_1$ by a complex $X_2$ with the 
following properties: $X^\#\subseteq X_2$; $H_1(X_2)$ is free abelian 
and $H_2(X_2)=\{0\}$; $X_2$ consists of $X^\#$ with finitely many 
1- and 2-cells added; $g$ is in the kernel of the map 
$\pi_1(X^\#)\rightarrow \pi_1(X_2)$.  Unlike $X_1$, $X_2$ is not a
subcomplex of $X$ but this is irrelevant.  Since $X_2$ is made by
attaching finitely many cells to the acyclic complex $X^\#$, note that
$H_1(X_2)$ is free abelian of finite rank.  Now make $X_3$ by attaching 
2-cells to exactly kill $H_1(X_2)$.  Thus $X_3$ is an acyclic 
2-complex, obtained by attaching the same finite number, $r$ say,
of 1- and 2-cells to $X^\#$.  If we write $\Gamma=\pi_1(X^\#)$ and
$\widetilde\Gamma:=\pi_1(X_3)$, then the relationship between $\Gamma$
and $\widetilde\Gamma$ is exactly as in the hypotheses of
Proposition~\ref{kervair}.  Here, the group generator
$x_i\in\widetilde\Gamma$ corresponds to a based loop in $X_3$ that
remains in $X^\#$ except that it travels once along the $i$th of
the $r$ new 1-cells, and the word $w_i$ spells out the attaching map
for the $i$th of the $r$ new 2-cells as a word in the elements of $\Gamma$
and the new loops $x_j$.  Moreover, since $X^\#$ and $X_3$ are both
acyclic, the relative homology groups $H_i(X_3,X^\#)$ all vanish,
which tells us that the determinant $d$ appearing in the statement of
Proposition~\ref{kervair} is equal to $\pm 1$.  Now 
Proposition \ref{kervair} can be applied and tells us that the 
representation $\rho':\pi_1(X^\#)\rightarrow U(n)$ extends to 
a representation $\tilde \rho:\pi_1(X_3)\rightarrow U(n)$. 
However, this contradicts the fact that $\rho'(g)\neq 1$, 
while $g$ maps to the identity in $\pi_1(X_3)$.  
\end{proof} 

\begin{remark} 
As an example of a suitable $L$, take a flag triangulation 
of the 2-skeleton of the Poincar\'e homology sphere (which 
was discussed in the previous section); here 
there is a faithful representation 
$\rho:\pi_1(L)\cong SL(2,5)\rightarrow U(2)$.   
\end{remark} 

\begin{remark} 
There is a version of Brown's question that remains open: 
for $m>2$, is there a virtually torsion-free group $G$ 
such that $\vcd G= m$ but there exists no contractible 
$m$-dimensional proper $G$-CW-complex?   
\end{remark}

\leftline{\bf Authors' address:}

\obeylines

\smallskip
{\tt i.j.leary@soton.ac.uk\qquad n.petrosyan@soton.ac.uk} 

\smallskip
School of Mathematical Sciences, 
University of Southampton, 
Southampton,
SO17 1BJ

\end{document}